\newtheorem{lemma}{Lemma}[section]
\newtheorem{definition}{Definition}[section]
\newtheorem{proposition}{Proposition}[section]
\newtheorem{theorem}{Theorem}[section]
\newtheorem{remark}{Remark}[section]
\newtheorem{corollary}{Corollary}[section]
\newcommand{\R}{\mathds{R}}
\newcommand{\C}{\mathds{C}}
\newcommand{\dt}{\mathsf{dt}}
\newcommand{\dx}{\frac{\partial}{\partial x}}
\newcommand{\dy}{\frac{\partial}{\partial y}}
\newcommand{\dxu}{\frac{\partial}{\partial x_1}}
\newcommand{\dxd}{\frac{\partial}{\partial x_2}}
\newcommand{\dxt}{\frac{\partial}{\partial x_3}}
\newcommand{\dxc}{\frac{\partial}{\partial x_4}}
\newcommand{\di}{\mathsf{d}}
\begin{document}

\title{Flat Affine and Symplectic Geometries on Lie groups }

 \author{Villab\'on, Andr\'es.*}

\date{\today}

\begin{abstract} 
In this paper we exhibit a family of flat left invariant affine structures on the double Lie group of the oscillator Lie group of dimension 4, associated to each solution of classical Yang-Baxter equation given by Boucetta and Medina. On the other hand, using Koszul's method, we prove the existence of an immersion of Lie groups between the group of affine transformations of a flat affine and simply connected manifold and the classical group of affine transformations of $\R^n$. In the last section, for each flat left invariant affine symplectic connection on the group of affine transformations of the real line, describe for Medina-Saldarriaga-Giraldo, we determine the affine symplectomorphisms. Finally we exhibit the Hess connection, associated to a Lagrangian bi-foliation, which is flat left invariant affine.     
\end{abstract}
\maketitle

Keywords: Flat affine Lie groups, Development of flat affine manifold, oscillator Lie group.

\vskip5pt
\noindent
 * Instituto de Matem\'aticas, Universidad de Antioquia, Medell\'in-Colombia

 e-mails: edgar.villabon@udea.edu.co

\maketitle

\section{Introduction}
Let $M$ be a (paracompact) connected real smooth manifold of dimension $n$. A \textbf{\textit{linear frame $u$}} at point $x\in M$ is an ordered basis $(X_1,\ldots,X_n)$ of the tangent space $T_x(M)$ it will usually be identified with a linear isomorphism from $\R^n$ onto $T_x(M).$ The set $L(M)$ of all linear frames at all points of $M$ is a smooth principal fiber bundle over $M$ with group $\mathsf{GL}(n,\R),$ the right action of $\mathsf{GL}(n,\R)$ on $L(M)$ is given by
\begin{equation*}
u\cdot a=u\circ a\quad\mbox{with $u\in L(M)$ and $a\in\mathsf{GL}(n,\R)$.}
\end{equation*}
The diffeomorphism of $L(M)$ associated to $a$ will be denoted $R_a (u)=u\cdot a.$\\
\\ 
The \textbf{\textit{canonical form}} $\theta$ of $L(M)$ is the $\R^n$-valued 1-form defined by 
\begin{equation*}
\theta(X_u):=u^{-1}(\pi_{\ast,u}(X_u))\hspace{0.4 cm}\mbox{for $X_u\in T_u(L(M))$}
\end{equation*}
where $\pi:L(M)\rightarrow M$ in the natural projection.\\
\\
In what follows we will privilege the notion of connection due to C. Ehresmann \cite{Eh}. When necessary we will also use the notions of E. Cartan \cite{C1} or  J.L. Koszul \cite{K1}.\\ 
\\
A \textit{\textbf{Linear connection}} $\Gamma$ over $M$  is a smooth distribution $H$ in $L(M)$ transverse and supplementary to the fiber and invariant under the right action of $\mathsf{GL}(n,\R),$ defined above. The subspace $H_u,$ where $u$ is a linear frame at $ x\in M,$ is called the horizontal space in $u.$
\\
It is clear that every vector $X_u\in T_u(L(M))$ can be uniquely written as $X_u=X_{u,H}+X_{u,V},$ with $X_{u,H}\in H_u$ and $X_{u,V}$ tangent to the fiber over $u.$\\
\\
Each $\vec{x}\in\R^n$ determines a \textbf{\textit{standard horizontal vector field}} $B(\vec{x})_u$ defined as the unique element of $H_u$ such that $\pi_{\ast,u}(B(\vec{x})_u)=u(\vec{x}).$\\
Moreover, $\Gamma$ defines a unique 1-form $\omega$ on $L(M)$ with values in  $\mathfrak{g}=\mathsf{gl}(n,\R),$ called the \textbf{\textit{connection form}}. More precisely $\omega_u(X_u)$ is the unique $A\in\mathfrak{g}$ such that $A^*_u=X_{u,V}$ where $$A^*_u=\left.\frac{\di}{\dt}\right|_{t=0} u\cdot\exp tA$$
is the \textit{fundamental vector field corresponding to} $A.$\\
The connection form $\omega$ satisfies the following conditions 
\begin{align}
\omega(A^*)&=A\quad \mbox{for all $A\in\mathfrak{g},$}\label{condicion1}\\
(R_{\sigma})^*\omega&=\mathsf{ad}(\sigma^{-1})\omega \quad\mbox{for all $\sigma\in\mathsf{GL}(n,\R),$}\label{condicion2}
\end{align}  
where $\mathsf{ad}$ denotes the adjoint representation of $\mathsf{GL}(n,\R).$ Having a connection $\Gamma$ is equivalent to have a 1-form $\omega$ on  $L(M)$ with values in $\mathfrak{g}$ verifying \eqref{condicion1} y \eqref{condicion2}. A smooth vector field $X\in\mathfrak{X}(L(M))$ such that $\omega(X)=0,$ is called \textbf{\textit{horizontal vector}} relative to $\Gamma$.
\\
\\
The \textbf{\textit{curvature form}} $\Omega$ and the \textbf{torsion form} $\Theta$ of $(L(M),\Gamma)$ are defined respectively by
$$\Omega_u(X_u,Y_u)=(\di\omega)_u(X_{u,H},Y_{u,H})$$
$$\Theta_u(X_u,Y_u)=(\di\theta)_u(X_{u,H},Y_{u,H})$$  
with $X_u,Y_u\in T_u(L(M))$ and $u\in L(M).$
\\
Note that $\Omega$ takes values in $\mathsf{gl}(n,\R)$ and $\Theta$  in $\R^n.$ The following equations are called \textbf{\textit{structure equations}} (also known as Cartan equations) (see \cite {KN}),

\begin{align}
(\di\theta)_u(X_u,Y_u)&=-\frac{1}{2}(\omega_u(X_u)\theta_u(Y_u)-\omega_u(Y_u)\theta_u(X_u))+\Theta_u(X_u,Y_u),\label{estructural1}\\
(\di\omega)_u(X_u,Y_u)&=-\frac{1}{2}[\omega_u(X_u),\omega(Y_u)]+\Omega_u(X_u,Y_u)\label{estructural2}
\end{align}
\\
\\
Given a global (or local) diffeomorphism  $f$ of $M$ we will denote $\widetilde{f}$ to the global (or local) diffeomorphism  of $L(M)$ defined by $\widetilde{f}(u)=(f_{\ast,x}(X_1),\ldots,f_{\ast,x}(X_n))$ with $u=(X_1,\ldots,X_n)$ a linear frame at $x\in M.$ The diffeomorphism $\widetilde{f}$ is called \textbf{\textit{ the natural lift}} of $f$ to $L(M).$
\\
When $f$ satisfies the condition $\widetilde{f}^{\ast}\omega=\omega,$ it is called a global (or local) \textbf{\textit{affine transformation}}  of $(M,\Gamma)$.\\  

If $M$  is simply connected endowed with a linear connection $\Gamma$, it is well known that in $\Gamma$ is flat, that is, without curvature, the connection over $M$ is isomorphic to the connection $\Gamma'$ given by the distribution $H$  in $(M\times\mathsf{GL}(n,\R),\psi)$ where $H_u$ is the tangent space to $M\times\{a\}$ in $u=(x,a)\in M\times\mathsf{GL}(n,\R).$ The connection $\Gamma'$ is called the \textbf{\textit{canonical flat (linear) connection}} over $M$ (see \cite{KN}).\\
Let $\pi_2:M\times\mathsf{GL}(n,\R)\rightarrow \mathsf{GL}(n,\R)$ natural projection. Notice that the connection form of the canonical flat linear connection is given by 
$$\omega'=\pi_2^*\theta^+$$
where $\theta^+$ is  the left invariant 1-form  over  $\mathsf{GL}(n,\R),$ with values in  $\mathsf{gl}(n,\R)$ defined by $$\theta^+(A^+):=A^+$$
and $A^+\in\mathfrak{g}$ is the left invariant vector field corresponding to $A\in T_{\varepsilon}\mathsf{GL}(n,\R).$ The 1-form $\theta^+$ is called the canonical 1-form over $\mathsf{GL}(n,\R)$.
\\
\\
If $M=G$ is a Lie group and the left multiplications $L_\sigma$ (respectively right  multiplications $R_\sigma$) are affine transformations it is said that $\Gamma$ is \textbf{\textit{left invariant (respectively right invariant)}}. This means that the lift $\widetilde{L}_\sigma$ of  ${L}_\sigma$  (respectively $\widetilde{R}_\sigma$) preserves the  distribution $H$ given by $\Gamma.$ In this case we denote the connection by $\Gamma^+$ and we will say that $(G,\Gamma^+)$ is  an affine Lie group.
\\
\\
It is known that having a linear connection $\Gamma$ over $M$ is equivalent to having a covariant derivative  $\nabla$ (see \cite{KN}). Let us consider the product \begin{equation}\label{Eq:productinducedbyaconnection}XY:=\nabla_XY,\quad\text{for}\quad X,Y\in \mathfrak{X}(M).\end{equation}  If the curvature  tensor relative to $\nabla$ is identically zero, this product verifies the condition
\[ [X,Y]Z=X(YZ)-Y(XZ), \quad\text{for all}\quad X,Y,Z\in \mathfrak{X}(M).\]
If both torsion and curvature vanish identically we have
\begin{equation} \label{Eq:leftsymmetricproduct} (XY)Z-X(YZ)=(YX)Z-Y(XZ), \quad\mbox{for all}\quad X,Y,Z\in \mathfrak{X}(M).\end{equation} 
A vector space endowed with a bilinear product satisfying Equation \eqref{Eq:leftsymmetricproduct} is called a \textbf{\textit{left symmetric algebra.}} Left symmetric algebras are Lie algebras with the bracket given by $[X,Y]_1:=XY-YX$. If $\nabla$ is torsion free this Lie bracket agrees with the usual Lie bracket of $\mathfrak{X}(M)$, that is \begin{equation}\label{equliebrakettorsionfree}
[X,Y]=XY-YX
\end{equation}
Recall that product \eqref{Eq:productinducedbyaconnection} satisfies $(fX)Y=f(XY)$ and $X(gY)=X(g)Y+g(XY)$, for all $f,g\in C^\infty(M,\R)$, i.e., the product defined above is $\R$-bilinear and $C^\infty(M,\R)$-linear in the first component. \\ 
If $M=G$ is a Lie group and $\nabla=\nabla^+$ is left invariant, it is well known that $(G,\nabla^+)$ is flat affine if and only if Product \eqref{Eq:productinducedbyaconnection} turn $\mathfrak{g}$ into a left symmetric algebra (see \cite{M2}). 
If this is the case, we denote by $\omega^+$ the corresponding connection form associated to the connection $\nabla^+.$\\

\section{Flat affine geometry on 4-dimensional oscillator Lie group}
The $3$-dimensional Heisenberg Lie group is the real manifold  $\mathsf{H}_3=\R\times\C$ with product 
$$(s,w)(s',w')=\left(s+s'+\frac{1}{2}\mathsf{Im}(\overline{w}w'),w+w'\right)$$ 
\begin{definition} Let $\lambda$ be a positive real number and $\rho:\R\rightarrow\mathsf{Aut}(\mathsf{H}_3)$ the Lie group homomorphism given by 
	$$\rho(t)(s,z)=(s,e^{it\lambda}z).$$ 
	The real $\lambda$-oscillator group of dimension $4$, denoted by $O_{\lambda}$, is the semidirect product of Heisenberg Lie group $\mathsf{H}_3$ for the group $(\R,+)$ by the representation $\rho.$ 
\end{definition}
The following result is known (see \cite{MR2}).
\begin{lemma}
	The Lie group $ O_{\lambda}$ is independent of $\lambda,$ for  $\lambda>0.$
\end{lemma}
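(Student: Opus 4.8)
The claim is that the $\lambda$-oscillator group $O_\lambda$ of dimension $4$ does not depend on $\lambda$ (up to isomorphism of Lie groups) for $\lambda > 0$. The plan is to exhibit an explicit isomorphism $O_\lambda \to O_{\mu}$ for arbitrary $\lambda, \mu > 0$, which by transitivity reduces to the case $\mu = 1$. First I would pass to the Lie algebra level, where the statement becomes more transparent: the Lie algebra $\mathfrak{o}_\lambda$ has a basis $\{P, Q, Z, T\}$ (with $Z$ central in the Heisenberg part, $[P,Q] = Z$) and the action of $T$ is the derivation $\mathrm{ad}_T$ that rotates the plane $\mathrm{span}\{P,Q\}$ with angular speed $\lambda$, i.e. $[T,P] = \lambda Q$, $[T,Q] = -\lambda P$, $[T,Z] = 0$. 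One then checks that rescaling $T \mapsto \tfrac{1}{\lambda} T$ (equivalently, absorbing $\lambda$ into the generator of the $\R$-factor) produces a Lie algebra isomorphism $\mathfrak{o}_\lambda \cong \mathfrak{o}_1$; the bracket relations are visibly preserved because the $\lambda$ appears only as a scalar multiplying the $T$-direction.

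Since both $O_\lambda$ and $O_1$ are connected and simply connected (each is diffeomorphic to $\R^4$ as the semidirect product of $\mathsf{H}_3 \cong \R^3$ with $\R$), the Lie algebra isomorphism integrates to a Lie group isomorphism $O_\lambda \cong O_1$ by the standard correspondence. Alternatively, and perhaps cleaner for the paper, I would write the isomorphism directly at the group level: using the semidirect product description, the map $O_\lambda \to O_1$ given by $((s,z), t) \mapsto ((s,z), \lambda t)$ is a diffeomorphism, and one verifies it is a homomorphism by comparing the two semidirect product multiplications — the representation $\rho_\lambda(t)(s,z) = (s, e^{it\lambda} z)$ satisfies $\rho_\lambda(t) = \rho_1(\lambda t)$, so the twisting in the product of $O_\lambda$ at parameter $t$ matches the twisting in $O_1$ at parameter $\lambda t$, which is exactly what the reparametrization $t \mapsto \lambda t$ demands.

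The verification that this map respects the Heisenberg multiplication is immediate since that part is untouched, and the check on the $\R$-component is the identity $\lambda(t + t') = \lambda t + \lambda t'$; the only genuine point is matching the cross term $\tfrac{1}{2}\mathrm{Im}(\overline{w}\, \rho_\lambda(t)w')$ against $\tfrac{1}{2}\mathrm{Im}(\overline{w}\, \rho_1(\lambda t)w')$, which holds by the observation $\rho_\lambda(t) = \rho_1(\lambda t)$ noted above. I do not expect any serious obstacle here: the result is essentially a reparametrization of the $\R$-factor, and the only care needed is bookkeeping with the semidirect product conventions. I would therefore present the proof as: (i) record the explicit map $\Phi_\lambda\colon O_\lambda \to O_1$, $((s,z),t)\mapsto((s,z),\lambda t)$; (ii) verify it is a group homomorphism using $\rho_\lambda(t)=\rho_1(\lambda t)$; (iii) note it is a diffeomorphism with inverse $((s,z),t)\mapsto((s,z),t/\lambda)$; conclude $O_\lambda \cong O_1$ and hence $O_\lambda \cong O_\mu$ for all $\lambda,\mu>0$.
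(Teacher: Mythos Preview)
Your proof is correct and takes a genuinely different route from the paper's. The paper attempts to intertwine the two representations $\rho$ and $\rho'$ by an automorphism $\phi$ of the Heisenberg factor $\mathsf{H}_3$, written as $\phi(s,z)=(s,z^{\lambda'/\lambda})$, while keeping the $\R$-parameter $t$ fixed; you instead leave $\mathsf{H}_3$ untouched and rescale the $\R$-factor via $t\mapsto\lambda t$, using the elementary identity $\rho_\lambda(t)=\rho_1(\lambda t)$. Your argument is fully explicit and immediately verifiable at the group level, and it sidesteps a real difficulty in the paper's version: the map $z\mapsto z^{\lambda'/\lambda}$ is not, as written, a well-defined automorphism of $(\C,+)$ (complex powers are neither single-valued nor additive), and in fact no automorphism of $\mathsf{H}_3$ can conjugate the rotation $e^{i\lambda t}$ into $e^{i\lambda' t}$ for all $t$ when $\lambda\neq\lambda'$, since conjugate linear maps share eigenvalues. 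The rescaling of the $\R$-direction that you carry out is therefore not merely an alternative but the essential step; your Lie-algebra remark ($T\mapsto T/\lambda$) makes the same point infinitesimally.
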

\begin{proof}
The statement follows from the fact that the map $\phi:\mathsf{H}_3\rightarrow\mathsf{H}_3$ defined by $\phi(s,z)=\left(s,z^{\frac{\lambda'}{\lambda}}\right),$  is an automorphism of Lie groups such that for all $t\in\R$ the following diagram commutes
\begin{equation*}
\begin{tikzcd}
\mathsf{H}_3 \arrow{r}{\rho(t)} \arrow{d}{\phi}
&\mathsf{H}_3 \arrow{d}{\phi}\\
\mathsf{H}_3 \arrow{r}{\rho'(t)} &\mathsf{H}_3
\end{tikzcd}
\end{equation*}
where $\rho$ and $\rho'$ are the representations
\begin{align*}
\begin{array}{cccc}
\rho:&\R&\rightarrow&\mathsf{Aut}(\mathsf{H}_3)\\
&t&\mapsto&(s,e^{\lambda ti})
\end{array}&  &and&& \begin{array}{cccc}
\rho':&\R&\rightarrow&\mathsf{Aut}(\mathsf{H}_3)\\
&t&\mapsto&(s,e^{\lambda' ti})
\end{array}
\end{align*}
\end{proof}
In the follows we denote by $O$ the oscillator Lie group of dimension $4$ with $\lambda=1.$ Let $(e_0,\ e_1, \ e_2,\ d)$ be the natural basis for $\R^4.$ The corresponding left invariant vector fields are as follows:
\begin{align}\label{Eq:basisofleftinvariant}
e_{0}^+&=\frac{\partial}{\partial x_1}\\ \notag
e_{1}^+&=\left(\frac{1}{2}(x_2\sin(x_4)-x_3\cos( x_4))\right)\frac{\partial}{\partial x_1}+\cos(x_4)\frac{\partial}{\partial x_2}+\sin(x_4)\frac{\partial}{\partial x_3}\\ \notag
e_{2}^+&=\left(\frac{1}{2}(x_2\cos(x_4)+x_3\sin(x_4))\right)\frac{\partial}{\partial x_1}-\sin(x_4)\frac{\partial}{\partial x_2}+\cos(x_4)\frac{\partial}{\partial x_3}\\ \notag
d^+&=\frac{\partial}{\partial x_4}.
\end{align}
It is directly verified that the non zero Lie brackets, except antisymmetry, are given by
\begin{align}\label{Eq:corcheteoscilador}
[e_1^+,e_2^+]=e_0^+, && [d^+,e_1^+]=e_2^+, && [d^+,e_2^+]=-e_1^+.
\end{align}
consequently the Lie algebra of $O$ is the real vector space with basis $\{e_0^+,e_1^+,e_2^+,d^+\}$ and  bracket  defined in  \eqref{Eq:corcheteoscilador}. This Lie algebra will be denoted by $\mathfrak{o}$.

\subsection{Flat affine structures on $O$} To construct these kind of structures, we start by finding flat left invariant affine structures on the group of isometries of the Euclidean plane $E(2)$, called the Euclidean group. Then we use the cohomology of left symmetric algebras developed in \cite{Ni} to extend these structures to the oscillator group of dimension $4$.\\
\\
Notice that the oscillator group can be viewed as an extension of the Euclidean group $E(2)$. This extension is given by the exact split sequence of Lie groups
$$\xymatrix{
	\{0\}\ar[r]&(\R,+)\ar[r]^(0.6){i}&O\ar[r]^(0.4){\pi}&E(2)\ar[r]&\{I\}
}$$
where $i(s)=(s,0,0),$ for all $s\in\R$ and 
\begin{align*}
\pi(s,x+iy,t)=\left(
\begin{array}{ccc}
\cos(t)&-\sin(t)&x\\
\sin(t)&\cos(t)&y\\
0&0&1
\end{array}\right),&&\mbox{for all $(s,x+iy,t)\in O,$}
\end{align*}
and the section is given by the Lie group homomorphism   $\mathbf{s}:E(2)\rightarrow O$  defined by $\mathbf{s}(A,(x,y))=(0,x+iy,t)$ where $A=\left(
\begin{matrix}
\cos(t)&-\sin(t)\\
\sin(t)&\cos(t)
\end{matrix}\right).$ \\ The Lie algebra of the Euclidean Lie group, $\mathfrak{e}(2)=\mathsf{Vect}_{\R}\{e_1,e_2,d\},$ has  Lie bracket given by $[d,e_1]=e_2$, $[d,e_2]=-e_1$, and $[e_1,e_2]=0.$
\begin{proposition}\label{PSGE2}
	The following left symmetric products determine flat left invariant non-isomorphic affine structures on the Euclidean group $E(2).$ 
	\begin{align}
	\mathcal{F}_1(\alpha)&& &\begin{array}{c|ccc}
	&e_1&e_2&d\\
	\hline
	e_1&0&0&\alpha e_1\\
	e_2&0&0&\alpha e_2\\
	d&\alpha e_1+e_2&-e_1+\alpha e_2&\alpha d
	\end{array},\hspace{0.2cm} \label{Producto1}
	\end{align}
	\begin{align}
	\mathcal{F}_2(\alpha)&& &\begin{array}{c|ccc}
	&e_1&e_2&d\\
	\hline
	e_1&0&0&0\\
	e_2&0&0&0\\
	d&e_2&-e_1&\alpha d
	\end{array},\quad\mbox{with $\alpha\in\R\setminus\{0\}$}\label{Producto2}
	\end{align}
	\begin{align}
	\mathcal{F}_3&& &\begin{array}{c|ccc}
	&e_1&e_2&d\\
	\hline
	e_1&d&0&0\\
	e_2&0&d&0\\
	d&e_2&-e_1&0
	\end{array}\label{Producto3}\\
	\mathcal{F}_4&& &\begin{array}{c|ccc}
	&e_1&e_2&d\\
	\hline
	e_1& e_1+2e_2-d&0& e_1+2e_2- d\\
	e_2&0& e_1+2e_2-d&2e_1+4e_2-2d\\
	d&e_1+3e_2- d&e_1+4e_2-2d&3e_1+11e_2-5d
	\end{array}\label{Producto4}&
	\end{align}
\end{proposition}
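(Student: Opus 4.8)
The plan is to establish Proposition~\ref{PSGE2} in two independent steps: (I) verify that each of the tables $\mathcal{F}_1(\alpha)$, $\mathcal{F}_2(\alpha)$, $\mathcal{F}_3$, $\mathcal{F}_4$ is a left symmetric product on $\mathfrak{e}(2)$ whose commutator is the Lie bracket of $\mathfrak{e}(2)$ --- so that, by the criterion recalled in the Introduction (see \cite{M2}), it produces a flat left invariant affine connection $\nabla^+$ on $E(2)$; and (II) verify that the four resulting structures are pairwise non-isomorphic.

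Step (I) is a finite computation. Because the two defining conditions are multilinear, it suffices to test them on the basis $\{e_1,e_2,d\}$. Compatibility with the bracket, i.e.\ \eqref{equliebrakettorsionfree}, reduces to the three identities $e_1e_2-e_2e_1=0$, $de_1-e_1d=e_2$ and $de_2-e_2d=-e_1$, each of which is read off directly from the corresponding table. Left-symmetry, i.e.\ \eqref{Eq:leftsymmetricproduct}, is the symmetry $(xy)z-x(yz)=(yx)z-y(xz)$ of the associator in its first two arguments; since it is trivial when $x=y$, it suffices to check it for $(x,y)$ running over the unordered pairs $\{e_1,e_2\}$, $\{e_1,d\}$, $\{e_2,d\}$ and $z$ over $\{e_1,e_2,d\}$, i.e.\ nine short identities per table. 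For instance, for $\mathcal{F}_3$ all associators $(xy)z-x(yz)$ vanish except $(e_1d)e_2-e_1(de_2)=(de_1)e_2-d(e_1e_2)=d$ and $(e_2d)e_1-e_2(de_1)=(de_2)e_1-d(e_2e_1)=-d$, so left-symmetry holds; $\mathcal{F}_1(\alpha)$, $\mathcal{F}_2(\alpha)$ and $\mathcal{F}_4$ are treated in the same mechanical way.

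Step (II) carries the real content. First I would pin down $\mathrm{Aut}(\mathfrak{e}(2))$: the ideal $\mathfrak{a}:=[\mathfrak{e}(2),\mathfrak{e}(2)]=\mathsf{Vect}_{\R}\{e_1,e_2\}$ is characteristic, so every $\varphi\in\mathrm{Aut}(\mathfrak{e}(2))$ restricts to some $A\in\mathsf{GL}(\mathfrak{a})$ and maps $d\mapsto\lambda d+v$ with $\lambda\in\R\setminus\{0\}$ and $v\in\mathfrak{a}$; writing $J$ for the matrix of $\mathsf{ad}(d)|_{\mathfrak{a}}$ in the basis $(e_1,e_2)$, applying $\varphi$ to $[d,e_i]$ forces $AJ=\lambda JA$, hence $\lambda^2=1$, with $A$ commuting with $J$ when $\lambda=1$ and anticommuting with $J$ when $\lambda=-1$ (equivalently, under $\mathfrak{a}\cong\C$, $e_1\mapsto1$, $e_2\mapsto i$, $A$ is complex-linear if $\lambda=1$ and complex-antilinear if $\lambda=-1$). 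An isomorphism of the affine structures is in particular an isomorphism of left symmetric algebras, hence an element of this group, so it suffices to separate the tables by quantities invariant under $\mathrm{Aut}(\mathfrak{e}(2))$-conjugation. I would use: (a) whether $\mathfrak{a}\cdot\mathfrak{a}\subseteq\mathfrak{a}$ --- this fails only for $\mathcal{F}_4$, where $e_1\cdot e_1=e_1+2e_2-d$; (b) among the other three, whether $\mathfrak{a}\cdot\mathfrak{a}=0$ --- true for $\mathcal{F}_1(\alpha)$ and $\mathcal{F}_2(\alpha)$, false for $\mathcal{F}_3$, where $e_1\cdot e_1=d$; (c) between $\mathcal{F}_1$ and $\mathcal{F}_2$, the dimension of the left annihilator $\{x:L_x=0\}$ with $L_xy=x\cdot y$ --- which is $0$ for $\mathcal{F}_1(\alpha)$ with $\alpha\neq0$ and $2$ for $\mathcal{F}_2(\alpha)$; and (d) the linear form $\tau(x):=\operatorname{tr}(R_x)$ with $R_xy=y\cdot x$, which equals $3\alpha\,d^{\ast}$ on $\mathcal{F}_1(\alpha)$ and $\alpha\,d^{\ast}$ on $\mathcal{F}_2(\alpha)$ (in the dual basis $(e_1^{\ast},e_2^{\ast},d^{\ast})$): combining the transformation law $\tau\mapsto\tau\circ\varphi^{-1}$ with the structure of $\mathrm{Aut}(\mathfrak{e}(2))$ shows that $|\alpha|$ is a genuine invariant of each family, while the antilinear automorphism $\lambda=-1$, $A=\operatorname{diag}(1,-1)$ exhibits $\mathcal{F}_i(\alpha)\cong\mathcal{F}_i(-\alpha)$; the degenerate member $\mathcal{F}_1(0)$, if admitted, has $\tau=0$ and is thereby separated from every $\mathcal{F}_2(\alpha)$.

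The main obstacle is Step (II), and within it the delicate point is not the evaluation of the invariants (a)--(d), each of which is immediate from the tables, but establishing that $\mathrm{Aut}(\mathfrak{e}(2))$ is exactly as described, so that no unforeseen automorphism can merge two of the structures sharing all four invariants; keeping track of how $\tau$ and the left annihilator behave under the antilinear component of $\mathrm{Aut}(\mathfrak{e}(2))$ is where the argument must be handled with care. Step (I) is, by contrast, entirely routine.
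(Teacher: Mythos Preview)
Your Step~(I) is fine. For Step~(II) you go well beyond the paper: the paper's own proof does not address non-isomorphism at all --- it writes the generic $L_{e_1},L_{e_2},L_d$ compatible with the bracket, imposes the flatness system $[L_{e_1},L_{e_2}]=0$, $[L_d,L_{e_1}]=L_{e_2}$, $[L_d,L_{e_2}]=-L_{e_1}$, and simply asserts that the four tables arise from solving it.

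There is, however, a genuine gap in your Step~(II). Your invariant (a), ``$\mathfrak{a}\cdot\mathfrak{a}\subseteq\mathfrak{a}$ fails only for $\mathcal{F}_4$'', is false: in $\mathcal{F}_3$ one has $e_1\cdot e_1=d\notin\mathfrak{a}$ (the very fact you quote under (b)). Hence (a) and (b) both lump $\mathcal{F}_3$ together with $\mathcal{F}_4$, while (c) and (d) are only deployed on the $\mathcal{F}_1/\mathcal{F}_2$ families; your scheme never separates $\mathcal{F}_3$ from $\mathcal{F}_4$.

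Worse, this gap is not repairable by choosing better invariants. Set $d':=d-e_1-2e_2$ and rewrite $\mathcal{F}_4$ in the basis $(e_1,e_2,d')$: a direct check from the table gives $e_1\cdot e_1=e_2\cdot e_2=-d'$, $e_1\cdot e_2=e_2\cdot e_1=0$, $e_i\cdot d'=0$, $d'\cdot e_1=e_2$, $d'\cdot e_2=-e_1$, and $d'\cdot d'=0$. The map $\varphi(e_1)=e_1$, $\varphi(e_2)=-e_2$, $\varphi(d)=-d'=e_1+2e_2-d$ is a Lie algebra automorphism of $\mathfrak{e}(2)$ (in your own parametrisation: $\lambda=-1$, $A=\mathrm{diag}(1,-1)$, $v=e_1+2e_2$) and carries the $\mathcal{F}_3$ product onto the $\mathcal{F}_4$ product. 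Thus $\mathcal{F}_3\cong\mathcal{F}_4$ as left symmetric algebras compatible with $\mathfrak{e}(2)$, so the non-isomorphism assertion in the proposition is, taken literally, not correct for this pair, and no invariant-based argument can distinguish them.
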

\begin{proof}
Recall that finding flat left invariant affine structures on $E(2)$ is equivalent to finding bilinear products on $\mathfrak{e}(2)$ satisfying \eqref{Eq:leftsymmetricproduct} and \eqref{equliebrakettorsionfree}.\\ 
Let us denote by $L_a$  the endomorphism of the vector space $\mathfrak{e}(2)$ defined by $L_a(b)=ab$.\\
If a bilinear product is compatible with the Lie bracket of $\mathfrak{e}(2)$, the endomorphisms $L_{e_1}$, $L_{e_2}$ and $L_d$ can be writen, in the base  $(e_1,e_2,d)$, as
\begin{align}\label{productoE}
L_{e_1}=(a_{ij}),&&
L_{e_2}=\left(\begin{array}{ccc}
a_{12}&b_{12}&b_{13}\\
a_{22}&b_{22}&b_{23}\\
a_{32}&b_{32}&b_{33}
\end{array}\right),&&
L_d=\left(\begin{array}{ccc}
a_{13}&b_{13}-1&c_{13}\\
1+a_{23}&b_{23}&c_{23}\\
a_{33}&b_{33}&c_{33}
\end{array}\right)
\end{align}
where $a_{ij},\ b_{ij}$ y $c_{ij}$ are real numbers.\\
Moreover, if the product on $\mathfrak{e}(2)$ 
satisfies \eqref{Eq:leftsymmetricproduct}, the Endomorphisms  \eqref{productoE} satisfy the system of equations 
\begin{align*}
[L_{e_1},L_{e_2}]&=0\\
[L_d,L_{e_1}]&=L_{e_2}\\
[L_d,L_{e_2}]&=-L_{e_1}
\end{align*}
Solving the system and using the above observations we obtain the products described in the tables.
\end{proof}

\subsection{Left symmetric algebra cohomology}
Let $A$ be a left symmetric algebra and $M$ a $A$-bimodulo. We denote by $H_{SG}^k(A,M)$ the cohomology group of order $k$ of $A$ with coefficients in $M.$ Let us consider $\R$ as a trivial $A$-bimodulo. A scalar 2-cocycle of $A$ is a bilinear map $f:A\times A\rightarrow\R$  verifying
$$f(ab-ba,c)=f(a,bc)-f(b,ac).$$ 
A scalar 2-coboundary of $A$ is a 2-cocycle $f$ such that  $f(a,b)=\varphi(ab),$ for some   $\varphi\in A^*$. The cohomology group  $H_{SG}^2(A,\R)=Z^2/B^2,$ where  $Z^2$ (respectively $B^2$) is the set of 2-cocycles (respectively 2-coboundaries), 
classifies the left symmetric algebras extensions of $ A $ for which $\R$ is an ideal of null square (see \cite{Ni}). In particular
$$0\rightarrow\R\rightarrow\R\times A\rightarrow A\rightarrow0$$
is an exact sequence of left symmetric algebras.\\
\begin{proposition}
	The 2-order cohomology groups of left symmetric algebras of $\mathfrak{e}(2)$ corresponding to the products $\mathcal{F}_1(0),\ \mathcal{F}_1(\alpha),$ $\mathcal{F}_2(\alpha),$ $\mathcal{F}_3$ y $\mathcal{F}_4$  are respectively  given by
	\begin{align*}
	H_{SG,1,0}^2(\mathfrak{e}(2),\R e_0)&\cong\mathsf{Vect}_{\R}\{E_{12}-E_{21},E_{22}+E_{11},E_{33}\}.\\
	H_{SG,1,\alpha}^2(\mathfrak{e}(2),\R e_0)&\cong\{0\}, \ \mbox{si $\alpha\neq0$}.\\
	H_{SG,2,\alpha}^2(\mathfrak{e}(2),\R e_0)&\cong\mathsf{Vect}_{\R}\{E_{11}+E_{22},E_{12}-E_{21}\}\\
	H_{SG,3}^2(\mathfrak{e}(2),\R e_0)&\cong\mathsf{Vect}\{E_{12}-E_{21}\}\\
	H_{SG,4}^2(\mathfrak{e}(2),\R e_0)&\cong\mathsf{Vect}\{E_{12}-E_{21}+2E_{13}-E_{23}\}.
	\end{align*}
	where $\{E_{ij}\}$ denote the elementary matrices on  $\mathsf{M}_{3\times3}(\R).$ 
\end{proposition}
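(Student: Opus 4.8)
The plan is to compute each cohomology group $H^2_{SG}(\mathfrak{e}(2),\R e_0)$ directly from the cocycle and coboundary conditions, using the presentation of scalar $2$-cocycles recalled just before the statement. Since a scalar $2$-cocycle is a bilinear map $f\colon\mathfrak{e}(2)\times\mathfrak{e}(2)\to\R$, I would represent $f$ by its matrix $F=(f(e_i,e_j))$ in the basis $(e_1,e_2,d)$, i.e. an element of $\mathsf{M}_{3\times3}(\R)$, so that the claimed answers sit naturally inside $\mathsf{M}_{3\times3}(\R)$ via the elementary matrices $E_{ij}$. The cocycle identity $f(ab-ba,c)=f(a,bc)-f(b,ac)$ must be imposed for all triples of basis vectors; because the Lie bracket $ab-ba$ on $\mathfrak{e}(2)$ is $[d,e_1]=e_2$, $[d,e_2]=-e_1$, $[e_1,e_2]=0$, the left-hand side only produces nonzero contributions for triples involving $d$, and the right-hand side is computed from the given tables for $\mathcal F_i$. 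This reduces to a finite linear system in the nine entries $f(e_i,e_j)$, whose solution space is $Z^2$.

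Next I would compute $B^2$: a coboundary is $f(a,b)=\varphi(ab)$ for $\varphi\in\mathfrak{e}(2)^*$, so writing $\varphi=\varphi_1 e_1^*+\varphi_2 e_2^*+\varphi_3 d^*$ and using the product table again, $B^2$ is the image of the linear map $\varphi\mapsto(\varphi(e_ie_j))_{ij}$, a subspace of dimension at most $3$ inside $Z^2$. The quotient $Z^2/B^2$ then gives the stated group. For each of the five products I would carry this out separately: for $\mathcal F_1(0)$ the product is ``almost trivial'' (only the $d$-row is nonzero and the diagonal block vanishes), so many coboundaries vanish and $Z^2$ is comparatively large, yielding the $3$-dimensional answer spanned by $E_{12}-E_{21}$, $E_{11}+E_{22}$, $E_{33}$; for $\mathcal F_1(\alpha)$ with $\alpha\neq0$ the extra scaling terms $\alpha e_1, \alpha e_2, \alpha d$ make the coboundary map surjective onto $Z^2$, forcing $H^2=0$; the remaining three cases interpolate between these extremes. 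The computations for $\mathcal F_3$ and especially $\mathcal F_4$ are the most laborious because the product tables have many nonzero structure constants, so the cocycle system and the coboundary image are both less sparse.

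The main obstacle is purely computational bookkeeping: there are five separate finite linear systems, each in up to nine unknowns with the cocycle constraints indexed by triples of basis vectors, and one must correctly track the quotient by $B^2$ in each case — in particular identifying which representatives (e.g. $E_{12}-E_{21}+2E_{13}-E_{23}$ for $\mathcal F_4$) survive. I would organize the work by first noting that the symmetric part of $f$ restricted to the abelian ideal $\mathsf{Vect}_\R\{e_1,e_2\}$ is constrained by the $SO(2)$-action coming from $\mathrm{ad}(d)$, which explains the recurring appearance of the rotation-invariant combinations $E_{11}+E_{22}$ and $E_{12}-E_{21}$; this representation-theoretic observation is the conceptual shortcut that makes the bookkeeping tractable and should be stated explicitly. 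After that, the only genuinely delicate point is verifying that no further coboundary relations collapse these representatives in the $\mathcal F_3$ and $\mathcal F_4$ cases, which I would check by exhibiting $\varphi$ explicitly and computing its image.
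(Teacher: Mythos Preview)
Your proposal is correct and follows essentially the same approach as the paper: a direct case-by-case computation of the spaces $Z^2$ and $B^2$ from the cocycle and coboundary conditions, represented as matrices in the basis $(e_1,e_2,d)$, followed by taking the quotient. The paper's proof simply records the resulting explicit forms of the cocycles and coboundaries for each product (stating that they are ``verified by direct calculations''), which is exactly the output your linear-system method would produce; your additional observation about the $SO(2)$-action of $\mathrm{ad}(d)$ on $\mathsf{Vect}_\R\{e_1,e_2\}$ is a helpful organizing remark not present in the paper, but it does not change the underlying method.
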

\begin{proof}
Let us consider $M=\R e_0$ as a trivial $\mathfrak{e}(2)$-bimodule, with $\mathfrak{e}(2)=\mathsf{Vect}_{\R}(e_1,e_2,d).$ Each of the following statements can be verified by direct calculations.\\
In the algebra with product given by $\mathcal{F}_1(0),$ the scalar  $2$-cocycles of $\mathfrak{e}(2)$ with coefficients in $M,$ are of the form $$g=p(E_{11}+E_{22})+q(E_{12}-E_{21})+rE_{31}+sE_{32}+tE_{33}$$
and the 2-coboundaries as $$f=rE_{31}+sE_{32}.$$
In the algebras corresponding to the products $\mathcal{F}_1(\alpha)$, with $\alpha\neq0$, the $2$-cocycles (respectively the 2-coboundaries), are given by
\begin{gather*}
g=p\left(E_{13}+E_{31}-\frac{1}{\alpha}E_{32}\right)+q\left(E_{23}+\frac{1}{\alpha}E_{31}+E_{32}\right)+rE_{33}
\end{gather*}
and $$f=p(\alpha E_{13}+\alpha E_{31}-E_{32})+q(\alpha E_{23}+E_{31}+\alpha E_{32})+rE_{33}.$$
\\
In the algebras with product $\mathcal{F}_2(\alpha)$ the 2-cocycles  (respectively the 2-coboundaries) are written $$g=t(E_{11}+E_{22})+s(E_{12}-E_{21})+rE_{31}+pE_{32}+qE_{33}$$ and  $$f=rE_{31}+pE_{32}+qE_{33}.$$
\\
The 2-cocycles (respectively the 2-coboundaries) associated to the algebra with product $\mathcal{F}_3,$ are given by $$g=t(E_{11}+E_{22})+s(E_{12}-E_{21})+rE_{31}+pE_{32}$$ and  $$f=t(E_{11}+E_{22})+rE_{31}+pE_{32}.$$
\\
Finally, the scalar 2-cocycles  (respectively 2-coboundaries) of the left symmetric algebra defined  by $\mathcal{F}_4,$ are of the form 
\begin{gather*}
g=p(E_{11}+E_{22}+E_{13}+2E_{23})+q(E_{12}-E_{21}+2E_{13}-E_{23})+\\r(E_{31}+ E_{33})+s(E_{32}+2E_{33})
\end{gather*} and $$f=p(E_{11}+E_{22}+E_{13}+2E_{23})+r(E_{31}+E_{33})+s(E_{32}+2E_{33}),$$
where $p,q,r,s,t$ are real numbers.\\
The previous statements  imply the results  enunciated in the proposition.
\end{proof}
\textit{Each class $[g]$ of the left symmetric 2-cohomology of $\mathfrak{e}(2)$ determines a left symmetric product in $\mathfrak{o}\cong\R e_0\rtimes_g\mathfrak{e}(2)$  by the formula $(e_0,a)\cdot(e_0,b)=(g(a,b),a\cdot b),$ where $g$ is a class representative $[g]$.} This method and other observations lead to the following result.   

\begin{theorem}
	The left symmetric products on the Lie algebra $\mathfrak{o}$ given by the following tables determine  non isomorphic flat left invariant affine connections on the 4-dimensional oscillator group 
	\begin{align}
	\mathcal{F}_1(0,t,s,q)&& &\begin{array}{c|cccc}
	&e_0&e_1&e_2&d\\
	\hline
	e_0&0&0&0&0\\
	e_1&0&t e_0&qe_0&0\\
	e_2&0&-qe_0&t e_0&0\\
	d&0&e_2&-e_1&s e_0
	\end{array}
	\\
	\mathcal{F}_2(\alpha,t,q) && &\begin{array}{c|cccc}
	&e_0 &e_1&e_2&d\\
	\hline
	e_0&0&0&0&0\\
	e_1&0&te_0&qe_0&0\\
	e_2&0&-qe_0&te_0&0\\
	d&0&e_2&-e_1&\alpha d
	\end{array}
	\\
	\mathcal{F}_3(q)&& &\begin{array}{c|cccc}
	&e_0   &e_1&e_2&d\\
	\hline
	e_0&0&0&0&0\\
	e_1&0&d&qe_0&0\\
	e_2&0&-qe_0&d&0\\
	d&0&e_2&-e_1&0
	\end{array}
	\end{align}
	\begin{align}
	&\begin{array}{c|cccc}
	\mathcal{F}_4(q) &e_0&e_1&e_2&d\\
	\hline
	e_0&0&0&0&0\\
	e_1&0&\frac{1}{2}e_1+e_2-d&qe_0&qe_0+\frac{1}{4}e_1+\frac{1}{2}e_2-\frac{1}{2}d\\
	e_2&0&-qe_0&\frac{1}{2}e_1+e_2-d&-\frac{q}{2}e_0+\frac{1}{2}e_1+e_2-d\\
	d&0&qe_0+\frac{1}{4}e_1+\frac{3}{2}e_2-\frac{1}{2}d&-\frac{q}{2}e_0-\frac{1}{2}e_1+e_2-d&-\frac{3}{8}e_1+\frac{7}{4}e_2-\frac{5}{4}d
	\end{array}
	\end{align}
	where $t,s\in[0,+\infty),$ $\alpha\in\R^*$ and $q=\frac{1}{2}.$
\end{theorem}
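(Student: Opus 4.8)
The plan is to use the extension machinery set up just before the statement: each scalar $2$-cocycle class $[g]\in H^2_{SG}(\mathfrak{e}(2),\R e_0)$ produces a left symmetric algebra $\R e_0\rtimes_g\mathfrak{e}(2)$ whose underlying Lie algebra is $\mathfrak{o}$, via $(e_0,a)\cdot(e_0,b)=(g(a,b),a\cdot b)$. So the first step is to take, for each of the four families $\mathcal{F}_i$ of Proposition \ref{PSGE2}, a convenient representative $g$ of a generic cohomology class computed in the preceding proposition, and simply expand the product formula. For instance, for $\mathcal{F}_1(0)$ the class is $g=t(E_{11}+E_{22})+s E_{33}+q(E_{12}-E_{21})$ (the $E_{31},E_{32}$ parts being coboundaries and hence droppable), which gives $g(e_1,e_1)=t$, $g(e_2,e_2)=t$, $g(e_1,e_2)=q$, $g(e_2,e_1)=-q$, $g(d,d)=s$, all others zero; feeding this into $(e_0,a)\cdot(e_0,b)=(g(a,b),a\cdot b)$ reproduces exactly the table for $\mathcal{F}_1(0,t,s,q)$. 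The same substitution, using the representatives listed for $\mathcal{F}_1(\alpha)$, $\mathcal{F}_2(\alpha)$, $\mathcal{F}_3$, $\mathcal{F}_4$, yields the remaining three tables; note that $\alpha\neq 0$ kills the cohomology in the $\mathcal{F}_1(\alpha)$ case, which is why that family does not appear, and the appearance of $q=\tfrac12$ and the explicit fractions in $\mathcal{F}_4$ comes from normalizing the single generator $E_{12}-E_{21}+2E_{13}-E_{23}$ against the base product $\mathcal{F}_4$.

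The second step is to verify that each resulting product is genuinely left symmetric and torsion-compatible, i.e. satisfies \eqref{Eq:leftsymmetricproduct} and \eqref{equliebrakettorsionfree} on $\mathfrak{o}$. For \eqref{equliebrakettorsionfree} one checks $XY-YX$ against the bracket \eqref{Eq:corcheteoscilador}; since $e_0$ is central and the cocycle contributes only to the $\R e_0$-component symmetrically or antisymmetrically in a controlled way, this reduces to the already-known torsion-freeness on $\mathfrak{e}(2)$ plus the observation that $g(a,b)-g(b,a)$ must vanish on the bracket, which is automatic because $\R e_0$ is a trivial module and the extension is by a null-square ideal. For \eqref{Eq:leftsymmetricproduct}, the associator identity, one invokes the general fact (from \cite{Ni}, recalled in the text) that $H^2_{SG}(A,\R)$ classifies precisely the null-square ideal extensions of left symmetric algebras, so that $\R e_0\rtimes_g\mathfrak{e}(2)$ is left symmetric by construction whenever $g$ is a cocycle; alternatively one can verify the associator identity directly on the basis $(e_0,e_1,e_2,d)$, which is a finite check exploiting that any term involving two $e_0$'s vanishes.

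The third step is the non-isomorphism claim. Here the strategy is: (a) distinct cohomology classes within one family $\mathcal{F}_i$ can give isomorphic left symmetric algebras only via an automorphism of $\mathfrak{o}$ covering an automorphism of $\mathfrak{e}(2)$ that fixes the class up to scalar, so one restricts the parameters $(t,s,q)$, $(\alpha,t,q)$, etc. to a slice transverse to this action — this is exactly what the constraints $t,s\ge 0$, $\alpha\in\R^*$, $q=\tfrac12$ encode; (b) products from different families $\mathcal{F}_i$ and $\mathcal{F}_j$ are non-isomorphic already at the level of the induced structures on $E(2)$, which was asserted in Proposition \ref{PSGE2}, and an isomorphism of the extensions would descend to one of the quotients $\mathfrak{e}(2)$, contradiction. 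One should spell out that the ideal $\R e_0$ is canonical (it is the image of the socle / the center of $\mathfrak{o}$), so every isomorphism of the extensions is filtered and does descend.

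The main obstacle I expect is the non-isomorphism bookkeeping in step three: computing the group of automorphisms of each left symmetric algebra $\mathcal{F}_i$ on $\mathfrak{e}(2)$, lifting them to $\mathfrak{o}$, and checking that the normal forms $t,s\in[0,\infty)$, $\alpha\in\R^*$, $q=\tfrac12$ really are a fundamental domain with no residual identifications — in particular confirming that the sign of $q$ can always be absorbed (forcing $q=\tfrac12$ rather than $q\in\R$) while $t$ and $s$ cannot be rescaled away except to their absolute values. The verification that the four tables are literally the expansions of the cocycle formula, and that they satisfy \eqref{Eq:leftsymmetricproduct}, is routine and will be dispatched by the general extension theorem of \cite{Ni} together with a remark that the listed matrices are cocycles by the previous proposition.
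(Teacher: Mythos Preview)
Your overall strategy coincides with the paper's: the theorem is stated immediately after the sentence ``This method and other observations lead to the following result,'' so the paper's own argument is precisely the extension construction via $H^2_{SG}(\mathfrak{e}(2),\R e_0)$ that you describe in your first and second steps, together with the preceding two propositions. Your write-up is in fact more explicit than anything the paper supplies.

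There is, however, a genuine error in your second step, and it propagates into your third. You claim that checking \eqref{equliebrakettorsionfree} ``reduces to the already-known torsion-freeness on $\mathfrak{e}(2)$ plus the observation that $g(a,b)-g(b,a)$ must vanish on the bracket, which is automatic.'' It is not automatic, and it does not vanish. The Lie bracket induced on $\R e_0\rtimes_g\mathfrak{e}(2)$ by the left symmetric product is
\[
[(\lambda e_0,a),(\mu e_0,b)]=(g(a,b)-g(b,a),\,[a,b]_{\mathfrak{e}(2)}),
\]
and for this to equal the oscillator bracket \eqref{Eq:corcheteoscilador} one needs the $e_0$-component of $[e_1,e_2]$ to be $1$. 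With $g=t(E_{11}+E_{22})+q(E_{12}-E_{21})+sE_{33}$ this reads $g(e_1,e_2)-g(e_2,e_1)=2q=1$, i.e.\ $q=\tfrac12$. The same computation in each of the other three families again forces $q=\tfrac12$. So the constraint $q=\tfrac12$ in the theorem is \emph{not} a normalization chosen to separate isomorphism classes, as you suggest in your third step; it is the unique value for which the extension has underlying Lie algebra equal to $\mathfrak{o}$ in the given basis. The Remark following the theorem confirms this reading: for $q\neq 0$ one obtains a left symmetric algebra whose underlying Lie algebra has $[e_1,e_2]=2q\,e_0$, hence is isomorphic (but not equal) to $\mathfrak{o}$, and rescaling $e_0$ carries it to the $q=\tfrac12$ product.

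Once you correct this, your step two goes through (left symmetry is indeed guaranteed by the cocycle condition via \cite{Ni}), and your step three simplifies: the parameter $q$ no longer needs to be handled in the non-isomorphism analysis at all, only $t,s,\alpha$ and the distinction between the four underlying $\mathfrak{e}(2)$-families remain.
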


\begin{corollary}
	The flat left invariant affine connections on the 4-dimensional oscillator group  given by the products  $\mathcal{F}_1\left(0,t,s,\frac{1}{2}\right), \mathcal{F}_3\left(\frac{1}{2}\right)$ and $\mathcal{F}_4\left(\frac{1}{2}\right)$ are geodesically  complete.
\end{corollary}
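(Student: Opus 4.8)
The plan is to use the standard criterion that a flat left invariant affine connection $\nabla^+$ on a Lie group $G$ is geodesically complete if and only if the associated left symmetric algebra $(\mathfrak{g},\cdot)$ is \emph{complete}, in the sense that every right multiplication $R_a\colon x\mapsto x\cdot a$ is a nilpotent endomorphism, or more usefully, that the affine vector fields generating the geodesic flow are complete. Concretely, I would invoke the result (due to Segal, and used by Medina and collaborators in this setting) that completeness is equivalent to the condition that the map $\mathrm{id}+R_a$ is bijective for every $a\in\mathfrak{g}$, equivalently that $\mathrm{tr}(R_a)=0$ for all $a$ together with a right-nilpotency condition; for each of the three products in the list this reduces to a finite linear-algebra check on the explicit matrices of the right multiplications $R_{e_0},R_{e_1},R_{e_2},R_d$ read off from the multiplication tables with $q=\tfrac12$.

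First I would write down, for each of $\mathcal{F}_1(0,t,s,\tfrac12)$, $\mathcal{F}_3(\tfrac12)$ and $\mathcal{F}_4(\tfrac12)$, the four right-multiplication operators in the basis $(e_0,e_1,e_2,d)$. Since $e_0$ is central and acts trivially on the left, and spans the kernel of all left multiplications, the operators have a block-triangular shape with respect to the ideal $\R e_0$; this already kills the $e_0$-direction and reduces completeness to a statement about the induced operators on $\mathfrak{o}/\R e_0\cong\mathfrak{e}(2)$. Next I would check that each right multiplication $R_x$ is nilpotent: for $\mathcal{F}_1(0,t,s,\tfrac12)$ the right multiplications by $e_1,e_2$ land in $\R e_0$ and hence square to zero, while $R_d$ is nilpotent because the $\mathfrak{e}(2)$-part is the rotation-type action sending $d\mapsto$ multiple of $e_0$ and $e_1\leftrightarrow e_2$ antisymmetrically with zero trace; a short computation shows a suitable power vanishes. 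The same bookkeeping applies to $\mathcal{F}_3(\tfrac12)$ and $\mathcal{F}_4(\tfrac12)$, using that in $\mathcal{F}_4$ the recurring vector $\tfrac12 e_1+e_2-d$ (resp.\ its variants) is acted on in a way that, modulo $\R e_0$, is nilpotent on $\mathfrak{e}(2)$.

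Having verified right-nilpotency, I would conclude completeness by the following standard argument: if every $R_x$ is nilpotent then the affine representation $x\mapsto(L_x,x)$ of $\mathfrak{g}$ on $\R^n$ integrates to an affine action of $G$ on $\R^n$ by \emph{unipotent} affine transformations, so the developing map $G\to\R^n$ is a diffeomorphism onto $\R^n$ (equivalently, the geodesics, which are pulled back from straight lines, are defined for all time); this is exactly the content of the theorem identifying geodesic completeness of $(G,\nabla^+)$ with completeness of the left symmetric structure on $\mathfrak{g}$. The main obstacle I anticipate is purely computational stamina rather than conceptual: one must be careful with $\mathcal{F}_4(\tfrac12)$, whose multiplication table has the messiest coefficients, to check that no eigenvalue of $R_d$ acquires a nonzero real part once one passes to the quotient by $\R e_0$; I would handle this by exhibiting an explicit flag $\R e_0\subset V_1\subset V_2\subset\mathfrak{o}$ stable under all $R_x$ on which each $R_x$ acts strictly upper-triangularly. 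For $\mathcal{F}_2(\alpha,t,\tfrac12)$ the operator $R_d$ has the eigenvalue $\alpha\neq0$ in the $d$-direction, so it is not nilpotent and the connection is not complete, which is why that family is (correctly) excluded from the statement.
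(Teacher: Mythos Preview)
Your strategy---verify that every right multiplication $R_a$ is nilpotent and invoke the equivalence of this with geodesic completeness---is valid, but it is much heavier than what the paper actually does. The paper's proof is one line: for each of the three products one checks that $\mathrm{tr}(R_a)=0$ for all $a\in\mathfrak{o}$, and then cites Helmstetter's theorem (see \cite{H}, \cite{M2}) that a finite-dimensional left symmetric algebra is complete \emph{if and only if} every right multiplication has trace zero. You mention the trace condition but treat it as insufficient on its own (``together with a right-nilpotency condition''); in fact Helmstetter's result says the trace-zero hypothesis already forces every $R_a$ to be nilpotent, so the nilpotency verification you plan is unnecessary. The trace check is immediate from the tables (e.g.\ for $\mathcal{F}_4(\tfrac12)$ one reads off $\mathrm{tr}(R_{e_1})=\tfrac12+0-\tfrac12=0$, $\mathrm{tr}(R_{e_2})=0+1-1=0$, $\mathrm{tr}(R_d)=\tfrac14+1-\tfrac54=0$), which is the whole point of using this criterion.

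Two of your intermediate claims are also inaccurate and would cost you time. First, for $\mathcal{F}_1(0,t,s,\tfrac12)$ you say that $R_{e_1}$ and $R_{e_2}$ land in $\R e_0$; this is false because $d\cdot e_1=e_2$ and $d\cdot e_2=-e_1$, so the image is larger (the operators are still nilpotent, but not for the reason you give). Second, the common-flag argument you propose cannot work for $\mathcal{F}_3(\tfrac12)$: the intersection $\bigcap_a\ker R_a$ is exactly $\R e_0$, and since $e_1\cdot e_1=d$ and $d\cdot e_1=e_2$ there is no $V\supsetneq\R e_0$ with $R_a(V)\subseteq\R e_0$ for every $a$, so no single flag makes all $R_a$ strictly upper-triangular simultaneously, even though each individual $R_a$ is nilpotent. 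Your plan would have to fall back on case-by-case characteristic-polynomial computations, which succeed but are exactly what the trace criterion lets one avoid.
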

\begin{proof}
This results follows from the fact that each left symmetric algebra has right multiplications $R_a$ of trace zero (see \cite{H} and \cite{M2}).
\end{proof}
\begin{remark}
	It is important to notice that left symmetric products\\ $\mathcal{F}_1(0,t,s,q),$ $\mathcal{F}_2(\alpha,t,q),$ $\mathcal{F}_3(q)$ and $\mathcal{F}_4(q),$ with $q\neq0$ are isomorphic respectively to\\ $\mathcal{F}_1(0,s,t,1/2),$  
	$\mathcal{F}_2(\alpha,t,1/2),$ $\mathcal{F}_3(1/2)$ y $\mathcal{F}_4(1/2).$ 
\end{remark}

\subsection{Double Lie groups of the oscillator group $O$ relative to a solution of classical Yang-Baxter equation}\vspace{12pt}

The aim of this section is to determine the dual Lie groups of the oscillator group $O$ relative to solutions of the classical Yang-Baxter equation. We describe the corresponding double Lie groups and we exhibit flat left invariant affine structures on them, by extending the flat affine structures on the oscillator group given in the previous section.\\
\\ 
Let $G$ be a real Lie group with Lie algebra $\mathfrak{g}$. A \textbf{\textit{solution of classical Yang-Baxter equation} (CYBE)} on $G,$ is a bivector $r\in\wedge^2\mathfrak{g}$ satisfying  
\begin{equation}\label{SYB}
[r,r]=0
\end{equation}
where
$[r,r]\in\wedge^3\mathfrak{g}$ is the Schouten bracket (see \cite{DZ}). The equation \eqref{SYB} is equivalent to
\begin{align*}
\alpha([r_{\sharp}(\beta),r_{\sharp}(\gamma)])+\beta([r_{\sharp}(\gamma),r_{\sharp}(\alpha)])+\gamma([r_{\sharp}(\alpha),r_{\sharp}(\beta)])=0,
\end{align*}
where $r_{\sharp}:\mathfrak{g}^*\rightarrow\mathfrak{g}$ is the linear homomorphism given by $\beta(r_{\sharp}(\alpha))=r(\alpha,\beta).$\\
Every solution $r$ of  CYBE on $G$, determites  a Lie algebra structure on the dual vector space  $\mathfrak{g}^*$ (see \cite{KS}), with Lie bracket  given by 
\begin{equation*}
[\alpha,\beta]_r=\mathsf{ad}^*_{r_{\sharp}(\beta)}\alpha-\mathsf{ad}^*_{r_{\sharp}(\alpha)}\beta
\end{equation*}
\begin{definition}
	The Lie algebra $\mathfrak{g}^*(r):=(\mathfrak{g}^*,[\ ,\ ]_r)$ is called dual Lie algebra of $\mathfrak{g}$ relative to $r.$ We denote by $G^*(r)$ a Lie group with Lie algebra  $\mathfrak{g}^*(r).$
\end{definition}
Notice that, if $\langle\ ,\ \rangle$ is an orthogonal structure on $\mathfrak{g},$ then $\mathfrak{g}^*(r)$ has a scalar product given by 
\begin{equation}
\langle\alpha,\beta\rangle^*=\langle\phi^{-1}(\alpha),\phi^{-1}(\beta)\rangle\label{estructuraortogonal}
\end{equation}
where $\phi(x)=\langle x,\ \rangle.$ We denote by $k^*$ the riemannian pseudo-metric on $G^*(r)$ determined by $\langle\quad,\quad\rangle^*$ and  $\nabla^*$ the Levi-Civita connection associated to $k^*$.\\
The essential tool to be used in this section is provided by the following result (see \cite{Med4}). 

\begin{theorem}[Boucetta-Medina]\label{MB}
	Let $(G,k)$ be a Lie group endowed with a bi-invariant pseudo-Riemannian metric and let $r$ be a solution of the generalized classical Yang-Baxter equation on $G$. Then, we have the following
	\begin{enumerate}
		\item $(G^*(r),k^*)$ is a locally symmetric pseudo-Riemannian manifold, i.e., $$\nabla^* R=0,$$
		where $R$ is the curvature of $k^*.$ In particular, $R$ vanishes identically when $r$ is a solution of the CYBE.
		\item If $k^*$ is flat then it is complete if and only if $G^*(r)$ is unimodular.  
	\end{enumerate} 
	If the two conditions are verified then $G^*(r)$ is solvable.  
\end{theorem}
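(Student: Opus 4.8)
The plan is to follow the structure of Boucetta-Medina's original argument, which rests on the interplay between the bi-invariant metric $k$ on $G$ and the Lie algebra structure $\mathfrak{g}^*(r)$ induced by the solution $r$ of the (generalized) classical Yang-Baxter equation. First I would translate everything to the Lie algebra level: write $\phi:\mathfrak{g}\to\mathfrak{g}^*$ for the isomorphism induced by $k$, transport the bracket $[\ ,\ ]_r$ of $\mathfrak{g}^*(r)$ back to $\mathfrak{g}$ via $\phi$, and identify $r_\sharp=\phi^{-1}\circ(\text{something})$ so that the bracket on $\mathfrak{g}$ becomes expressible in terms of the skew-symmetric endomorphism $R:=\phi^{-1}\circ r\circ\phi^{\ast-1}$ or rather the operator $r_\sharp\circ\phi$. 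Since $k$ is bi-invariant, $\mathsf{ad}_x$ is skew-adjoint with respect to $\langle\ ,\ \rangle$, and this is the key structural fact that makes the Levi-Civita connection $\nabla^\ast$ of $k^\ast$ computable: one gets an explicit Koszul-type formula for $\nabla^\ast$ in terms of $\mathsf{ad}$ and the endomorphism associated with $r$.

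Next I would compute the curvature $R^\ast$ of $\nabla^\ast$ from that explicit formula. The generalized CYBE condition on $r$ should collapse most terms, leaving $\nabla^\ast R^\ast=0$; the plain CYBE condition $[r,r]=0$ should kill $R^\ast$ outright. This is essentially a bracket-identity verification: expand $R^\ast(X,Y)Z=\nabla^\ast_X\nabla^\ast_Y Z-\nabla^\ast_Y\nabla^\ast_X Z-\nabla^\ast_{[X,Y]_r}Z$, substitute the Koszul formula, and use skew-adjointness of $\mathsf{ad}$ together with the Jacobi identity in $\mathfrak{g}$ and the Yang-Baxter relation for $r$. For part (2), flatness plus the standard fact (Milnor) that a flat left-invariant pseudo-Riemannian metric on a Lie group is geodesically complete precisely when the group is unimodular gives the equivalence directly — here I would invoke the characterization of completeness via the trace of the left-multiplication operators of the associated left-symmetric algebra, exactly as used in the Corollary above. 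For the final clause, once $k^\ast$ is flat (or locally symmetric), one argues that $\mathfrak{g}^\ast(r)$ carries an $\mathsf{ad}$-invariant symmetric bilinear form coming from $k^\ast$ together with a flat connection, and a pseudo-Riemannian Lie group that is flat (or locally symmetric) with such structure must be solvable — this follows because its Lie algebra admits a left-symmetric product, and a Lie algebra admitting a left-symmetric (left-invariant flat affine) structure together with these metric constraints cannot contain a semisimple part.

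The main obstacle I anticipate is the curvature computation in part (1): getting the Koszul formula for $\nabla^\ast$ in a form clean enough that the Yang-Baxter identity visibly makes $\nabla^\ast R^\ast$ vanish. The difficulty is bookkeeping — there are several terms mixing $\mathsf{ad}_x$, $r_\sharp$, and $\phi$, and one must repeatedly use that $\langle\mathsf{ad}_x y,z\rangle=-\langle y,\mathsf{ad}_x z\rangle$ to move operators across the inner product and recognize the Schouten-bracket expression. A secondary subtlety is the word "generalized" in the hypothesis: I would need to be careful that the modified Yang-Baxter equation (with a nonzero right-hand side proportional to the Cartan $3$-tensor) still yields $\nabla^\ast R^\ast=0$ rather than $R^\ast=0$, which is why the theorem distinguishes the two cases. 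Since the excerpt cites \cite{Med4} for this theorem, I would mirror that reference's computation rather than reconstruct it from scratch, and simply indicate the steps above as the skeleton, leaving the term-by-term verification to the cited source.
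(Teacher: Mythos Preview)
The paper does not prove this theorem at all: it is stated as a quoted result from \cite{Med4} (introduced with ``The essential tool to be used in this section is provided by the following result (see \cite{Med4})'') and no proof or even sketch is given in the present paper. There is therefore nothing here to compare your proposal against.

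Your sketch is a reasonable outline of the kind of argument one expects in \cite{Med4} --- transport the dual bracket back to $\mathfrak{g}$ via the metric isomorphism, exploit skew-adjointness of $\mathsf{ad}$ coming from bi-invariance to get an explicit Koszul formula for $\nabla^\ast$, and then let the (generalized) Yang--Baxter identity collapse the curvature computation --- and you correctly flag the distinction between the generalized CYBE (giving $\nabla^\ast R=0$) and the strict CYBE (giving $R=0$). You also correctly anticipated, in your final sentence, that the present paper defers entirely to \cite{Med4}; that is exactly what happens. If your goal is to supply a proof for this paper, you would indeed have to go to \cite{Med4} and reproduce the computation there, since nothing in the text at hand fills in the details.
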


Let $r$ be a solution of the CYBE on $G,$ then every Lie group of $G^*(r)$ is endowed with a flat affine left invariant connection given by 
\begin{equation*}
\alpha\cdot\beta:=-\mathsf{ad}^*_{r_{\sharp}(\alpha)}\beta
\end{equation*} 
for $\alpha,\beta\in\mathfrak{g}^*$ (see for instance \cite{DM}).  
The real vector space $\mathcal{D}(\mathfrak{g},r):=\mathfrak{g}\oplus\mathfrak{g}^*(r)$ endowed with the Lie bracket
$$[(x,\alpha),(y,\beta)]=([x,y]+\mathsf{ad}_{\alpha}^*y-\mathsf{ad}_{\beta}^*x,[\alpha,\beta]_r+\mathsf{ad}_x^*\beta-\mathsf{ad}_y^*\alpha)$$ 
is called \textbf{the double Lie algebra of Lie algebra $\mathfrak{g}$ relative to $r$.} Moreover, it is verified that the scalar product  $$\langle(x,\alpha),(y,\beta)\rangle:=\beta(x)+\alpha(y)$$
turns  $\mathcal{D}(\mathfrak{g},r)$ into an orthogonal Lie algebra. It is verified that $\mathfrak{g}$ y $\mathfrak{g}^*(r)$ are totally isotropic subalgebras of  $\mathcal{D}(\mathfrak{g},r)$  (see \cite{D}). Therefore, if  $\dim\mathfrak{g}=n,$ it holds that $\langle\ ,\ \rangle$ has signature $(n,n).$\\  

Boucetta M. and Medina A. in \cite{Med4} determined generic solutions of the CYBE of the $\lambda$-oscillator group $O_{\lambda}$ of dimension $n$. In the case  $n=4$ such solutions of the CYBE are written as 
$$r=\alpha_1e_0\wedge e_1+\alpha_2e_0\wedge e_2+\alpha_3e_0\wedge d$$
for $\alpha_1,\alpha_2,\alpha_3\in\R.$ The dual Lie algebra  $\mathfrak{o}^*(r)=\mathsf{Vect}\{e_0^*,e_1^*,e_2^*,d^*\}$ has non zero Lie brackets (except antisymmetry):
\begin{align*}
[e_0^*,e_1^*]=-\alpha_2d^*+\alpha_3e_2^* && [e_0^*,e_2^*]=\alpha_1d^*-\alpha_3e_1^*.
\end{align*}
The left symmetric product compatible with the Lie bracket is given by the following products 
\begin{align}\label{productoYBC}
\begin{split}
e_0^*\cdot e_0^*=\alpha_2e_1^*-\alpha_1e_2^*,&\quad e_0^*\cdot e_1^*=-\alpha_2d^*+\alpha_3e_2^*,\\
e_0^*\cdot e_2^*=&\alpha_1d^*-\alpha_3e_1^*.
\end{split}
\end{align}
The following proposition describes the dual Lie groups of $O$ relative to  solutions of the CYBE. 
\begin{lemma} The simply connected flat affine Lie groups  $O^*(r)$ corresponding to the Lie algebras $\mathfrak{0}^*(r)$ where $r=\alpha_1e_0\wedge e_1+\alpha_2e_0\wedge e_2+\alpha_3e_0\wedge d$ is a solution of the CYBE on $O$, are given by
	\begin{enumerate}
		\item $O^*(r)=\R^4,$ if $r=0.$
		\item $O^*(r)=\mathsf{H}_3\times\R\quad \mbox{(direct product)},$
		if $r=\alpha_1e_0\wedge e_1\neq0$ o $r=\alpha_2e_0\wedge e_2\neq0.$  
		\item $O^*(r)=\R^4$ with product
		$$(x,y,z,w)(x',y',z',w')=(x+x'+(w-z)y'+wz'+zw',y+y',z+z',w+w'),$$
		if $\alpha_3=0$ and $\alpha_1\alpha_2\neq0.$ 
		\item $O^*(r)=(\C\rtimes\R)\times\R$ with product $$(z,s,t)(z',s',t')=(z+e^{i\alpha_3s}z',s+s',t+t'),$$
		if $\alpha_3\neq0.$
	\end{enumerate}
\end{lemma}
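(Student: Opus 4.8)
The plan is to identify, for each solution $r$ of the CYBE, the Lie bracket of $\mathfrak{o}^*(r)$ and then integrate it to a simply connected Lie group by exhibiting an explicit global chart. From the formulas just above the statement, the only possibly nonzero brackets are
\[
[e_0^*,e_1^*]=-\alpha_2 d^*+\alpha_3 e_2^*,\qquad [e_0^*,e_2^*]=\alpha_1 d^*-\alpha_3 e_1^*,
\]
so the analysis naturally splits according to which $\alpha_i$ vanish. First I would dispose of the case $r=0$: here all brackets vanish, $\mathfrak{o}^*(r)$ is abelian, and the simply connected group is $\R^4$, giving (1). Next, for $r=\alpha_1 e_0^*\wedge e_1$ (and symmetrically $r=\alpha_2 e_0^*\wedge e_2$), only $[e_0^*,e_1^*]=\alpha_1 d^*$ survives (resp. $[e_0^*,e_2^*]=\alpha_2 d^*$); one checks this three-dimensional subalgebra spanned by $e_0^*,e_1^*,d^*$ is isomorphic to the Heisenberg algebra $\mathfrak{h}_3$ (with $e_0^*,e_1^*$ the "position-momentum" pair and $d^*$ central), while $e_2^*$ spans a central $\R$ factor that is a direct summand. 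Integrating, $O^*(r)\cong\mathsf{H}_3\times\R$, which is (2).

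For (3), with $\alpha_3=0$ and $\alpha_1\alpha_2\neq 0$, the brackets are $[e_0^*,e_1^*]=-\alpha_2 d^*$ and $[e_0^*,e_2^*]=\alpha_1 d^*$. This is a $4$-dimensional nilpotent (in fact $2$-step) Lie algebra with one-dimensional center $\R d^*$; I would verify the Jacobi identity is automatic (all triple brackets land in the center and are killed by a further bracket), then produce the group as $\R^4$ with coordinates adapted so that $x$ is the "central" direction. The stated product
\[
(x,y,z,w)(x',y',z',w')=(x+x'+(w-z)y'+wz'+zw',\,y+y',\,z+z',\,w+w'),
\]
should be checked to be associative with identity $0$ and inverse computable explicitly, and then one confirms its Lie algebra reproduces the above brackets by differentiating the left-translations at the identity. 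For (4), with $\alpha_3\neq 0$, I would first note that the subalgebra $\mathsf{Vect}\{e_0^*,e_1^*,e_2^*\}$ closes with $[e_0^*,e_1^*]=\alpha_3 e_2^*+(\text{term in }d^*)$; rescaling/changing basis absorbs the $d^*$ contributions into a semidirect-product description. Concretely, $\mathrm{ad}_{e_0^*}$ acts on $\mathrm{span}\{e_1^*,e_2^*\}$ (after adjusting by $d^*$) as the rotation generator $\begin{pmatrix}0&-\alpha_3\\ \alpha_3&0\end{pmatrix}$, so the relevant subgroup is $\C\rtimes\R$ with $s\in\R$ acting by multiplication by $e^{i\alpha_3 s}$, and the remaining direction is a central $\R$; hence $O^*(r)\cong(\C\rtimes\R)\times\R$ with the stated product. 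In each case, simple connectedness is immediate because the underlying manifold is diffeomorphic to $\R^4$, and flat affineness of $O^*(r)$ follows from the general principle recalled before the statement (every $O^*(r)$ carries the flat left invariant connection $\alpha\cdot\beta=-\mathsf{ad}^*_{r_\sharp(\alpha)}\beta$).

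The routine part is the associativity checks and the bracket computations; the one genuinely delicate point is case (4), where the brackets $[e_0^*,e_1^*]=-\alpha_2 d^*+\alpha_3 e_2^*$ and $[e_0^*,e_2^*]=\alpha_1 d^*-\alpha_3 e_1^*$ mix the $e_i^*$ and $d^*$ directions, so the identification with $(\C\rtimes\R)\times\R$ requires first choosing the correct complement/change of basis (essentially replacing $e_1^*,e_2^*$ by combinations with $d^*$) to decouple a genuine central $\R$ factor from the oscillator-type rotation block. I would handle this by writing $\mathrm{ad}_{e_0^*}$ as a $4\times 4$ matrix in the basis $(e_0^*,e_1^*,e_2^*,d^*)$, computing its Jordan/real canonical form, reading off the $2\times 2$ rotation block of angular speed $\alpha_3$ together with a $2$-dimensional kernel, and thereby exhibiting the explicit diffeomorphism to $(\C\rtimes\R)\times\R$ that makes the displayed multiplication law transparent.
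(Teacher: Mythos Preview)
Your approach is essentially the paper's: case-by-case identification of the bracket of $\mathfrak{o}^*(r)$ followed by explicit integration to a simply connected group diffeomorphic to $\R^4$. The paper phrases case~(3) as a central extension of the abelian algebra $\mathsf{Vect}_\R\{e_0^*,e_1^*,e_2^*\}$ by $\R d^*$ via an explicit $2$-cocycle, and in case~(4) it writes down directly the new basis $\{f_0,f_1,f_2,d\}$ (with $d$ playing the role of $e_0^*$ and $f_0$ the role of $d^*$) in which the only nonzero brackets are $[d,f_1]=\alpha_3 f_2$, $[d,f_2]=-\alpha_3 f_1$; your Jordan-form computation of $\mathrm{ad}_{e_0^*}$ would reproduce exactly this basis, so the two routes coincide.

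There is, however, a bookkeeping slip in your case~(2). With $r=\alpha_1 e_0\wedge e_1$ (so $\alpha_2=\alpha_3=0$), the bracket formulas give $[e_0^*,e_1^*]=0$ and $[e_0^*,e_2^*]=\alpha_1 d^*$, not the other way around; thus the Heisenberg piece is $\mathsf{Vect}_\R\{e_0^*,e_2^*,d^*\}$ and the central line is $\R e_1^*$ (and symmetrically for $r=\alpha_2 e_0\wedge e_2$). The conclusion $O^*(r)\cong\mathsf{H}_3\times\R$ is unaffected. Similarly, in case~(3) the center of $\mathfrak{o}^*(r)$ is two-dimensional (it contains both $d^*$ and $\alpha_1 e_1^*+\alpha_2 e_2^*$), not just $\R d^*$; what is one-dimensional is the derived subalgebra $[\mathfrak{o}^*(r),\mathfrak{o}^*(r)]=\R d^*$, which is what you actually need for the $2$-step nilpotent description and the displayed group law.
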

\begin{proof}
When $\alpha_1,\alpha_2,\alpha_3=0,$ we obtain that $\mathfrak{o}^*(r)$ is abelian, then $O^*(r)=\R^4$.\\
In the second case, the Lie algebra $\mathfrak{o}_4^*(r)$ is isomorphic to  direct sum $\mathfrak{h}_3\oplus\R d,$ it follows  that $O^*(r)=\mathsf{H}_3\times\R.$\\
If $\alpha_3=0,$ the Lie algebra $\mathfrak{o}^*(r)$ is isomorphic to Lie algebra $$\R d^*\times_{g}\mathfrak{m}$$ central extension of abelian algebra $\mathfrak{m}=\mathsf{Vect}_{\R}\{e_0^*,e_1^*,e_2^*\}$ by 
means of the scalar 2-cocicle $g$ of $\mathfrak{
	m}$ defined by
\begin{align}\label{coci1}
\begin{split}
g(e_0^*,e_1^*)&=-1,\quad g(e_0^*,e_2^*)=1,\\
g(e_i^*,e_j^*)&=0\quad\mbox{for $(i,j)\neq(0,1),(0,2).$}
\end{split}
\end{align}
Then, the Lie group $O^*(r)$ with Lie algebra $\mathfrak{o}^*(r)$ is $\R^4$ with product
$$(x,y,z,w)(x',y',z',w')=(x+x'+(w-z)y'+wz'+zw',y+y',z+z',w+w')$$
\\
For $\alpha_3\neq0,$ we obtain that $\mathfrak{o}^*(r)$ is isomorphic to Lie algebra $\mathsf{Vect}_{\R}\{f_0,f_1,f_2,d\}$ 
\begin{align*}
[d,f_1]=\alpha_3f_2,&&[d,f_2]=-\alpha_3f_1.
\end{align*}
It follows 
that $$\mathfrak{o}^*(r)=\mathsf{Vect}_{\R}\{f_1,f_2\}\rtimes_d\R d\oplus\R f_0$$ where $d\in\mathsf{End}(\mathsf{Vect}_{\R}\{f_1,f_2\}).$ It verifies that  $O^*(r)=(\C\rtimes\R)\times\R$ with the product \[(s,z,t)(s',z',t')=(s+s',z+e^{i\alpha_3s}z',t+t'),\text{ for all }(s,z,t),(s',z',t')\in O^*(r).\]
\end{proof}
\begin{theorem}
	The Lie groups $O^*(r)$ defined by each solution of the CYBE are endowed with a flat left invariant Lorentz metric, a symplectic form and a flat  affine left invariant connection.   
\end{theorem}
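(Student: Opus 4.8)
The plan is to produce the three structures one at a time, and to do so uniformly in $\alpha_1,\alpha_2,\alpha_3$ rather than running through the case split of the previous Lemma. The flat affine left invariant connection costs nothing: as recalled just before the previous Lemma, for any solution $r$ of the CYBE the product $\alpha\cdot\beta=-\mathsf{ad}^*_{r_{\sharp}(\alpha)}\beta$ — written out explicitly in \eqref{productoYBC} — is a left symmetric product compatible with $[\ ,\ ]_r$, so by the discussion around \eqref{Eq:leftsymmetricproduct} and \eqref{equliebrakettorsionfree} it is the product of a flat torsion-free left invariant connection on $O^*(r)$. I would simply quote this.

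For the Lorentz metric I would invoke the Boucetta--Medina Theorem~\ref{MB}. First I would recall that $\mathfrak{o}$ is a quadratic Lie algebra: the symmetric bilinear form $\langle\ ,\ \rangle$ on $\mathfrak{o}$ determined by $\langle e_0,d\rangle=\langle e_1,e_1\rangle=\langle e_2,e_2\rangle=1$ and zero on the remaining pairs of the basis $\{e_0,e_1,e_2,d\}$ is $\mathsf{ad}$-invariant (a one-line check against the brackets \eqref{Eq:corcheteoscilador}), and its Gram matrix has signature $(3,1)$; hence the associated bi-invariant metric $k$ on $O$ is Lorentzian. Since $r$ solves the CYBE, part~(1) of Theorem~\ref{MB} gives that the curvature of $k^*$ vanishes identically, so $k^*$ is flat; and since $\phi(x)=\langle x,\ \rangle$ is a linear isomorphism $\mathfrak{o}\to\mathfrak{o}^*$, the scalar product $\langle\ ,\ \rangle^*$ of \eqref{estructuraortogonal} again has signature $(3,1)$. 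Therefore $k^*$ is a flat left invariant Lorentz metric on $O^*(r)$.

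For the symplectic form I would exhibit one explicitly. Let $(\varepsilon^0,\varepsilon^1,\varepsilon^2,\delta)$ be the left invariant coframe on $O^*(r)$ dual to $(e_0^*,e_1^*,e_2^*,d^*)$ and put $\omega:=\varepsilon^0\wedge\delta+\varepsilon^1\wedge\varepsilon^2$. Then $\omega\wedge\omega=2\,\varepsilon^0\wedge\delta\wedge\varepsilon^1\wedge\varepsilon^2\neq0$, so $\omega$ is nondegenerate; and it is closed because the only nonzero brackets of $\mathfrak{o}^*(r)$, namely $[e_0^*,e_1^*]=-\alpha_2d^*+\alpha_3e_2^*$ and $[e_0^*,e_2^*]=\alpha_1d^*-\alpha_3e_1^*$, force $\di\varepsilon^0=0$ while $\di\varepsilon^1$, $\di\varepsilon^2$ and $\di\delta$ are, up to nonzero constants, $\varepsilon^0\wedge\varepsilon^2$, $\varepsilon^0\wedge\varepsilon^1$ and $\alpha_2\varepsilon^0\wedge\varepsilon^1-\alpha_1\varepsilon^0\wedge\varepsilon^2$ respectively; in each of $\varepsilon^0\wedge\di\delta$, $\di\varepsilon^1\wedge\varepsilon^2$ and $\varepsilon^1\wedge\di\varepsilon^2$ a $1$-form is repeated, so $\di\omega=\di\varepsilon^0\wedge\delta-\varepsilon^0\wedge\di\delta+\di\varepsilon^1\wedge\varepsilon^2-\varepsilon^1\wedge\di\varepsilon^2=0$. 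Thus $\omega$ is a left invariant symplectic form on $O^*(r)$ for every $(\alpha_1,\alpha_2,\alpha_3)$; for $r=0$ it is the standard symplectic form of $\R^4$.

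The only step that is not purely mechanical is guessing this $\omega$; everything else is a consequence of Theorem~\ref{MB} together with the classical quadratic structure of $\mathfrak{o}$, and of the product \eqref{productoYBC}. The point that makes the choice work is that every bracket of $\mathfrak{o}^*(r)$ is a combination of $e_0^*\wedge e_1^*$ and $e_0^*\wedge e_2^*$ with image inside $\mathsf{Vect}\{e_1^*,e_2^*,d^*\}$, which is exactly what is needed for the naive product form $\varepsilon^0\wedge\delta+\varepsilon^1\wedge\varepsilon^2$ to be closed. If one preferred, the symplectic statement could instead be obtained case by case from the previous Lemma — $O^*(r)$ is $\R^4$, or $\mathsf{H}_3\times\R$, or $(\C\rtimes\R)\times\R$, all of which are symplectic Lie groups — but the uniform computation above is shorter and avoids the classification.
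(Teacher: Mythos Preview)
Your proof is correct and follows essentially the same route as the paper: Theorem~\ref{MB} for the flat Lorentz metric, and the explicit left invariant $2$-form with $\omega(e_0^*,d^*)=\omega(e_1^*,e_2^*)=1$ for the symplectic structure. The one substantive difference is in how the flat affine connection is supplied: you simply quote the product~\eqref{productoYBC}, whereas the paper instead derives a \emph{second} flat affine connection from the symplectic form via the formula $\omega(xy,z)=-\omega(y,[x,z])$ and lists its nonzero products; that extra computation is what feeds the Remark immediately following the theorem, but your shortcut is entirely adequate for the statement as phrased. You also make explicit the $\mathsf{ad}$-invariant form on $\mathfrak{o}$ and its signature, which the paper leaves implicit. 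One cosmetic point: you write that $\di\varepsilon^1$ and $\di\varepsilon^2$ equal $\varepsilon^0\wedge\varepsilon^2$ and $\varepsilon^0\wedge\varepsilon^1$ ``up to nonzero constants'', but the constant is $\pm\alpha_3$, which may vanish --- the argument is of course unaffected, since then those differentials are zero.
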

\begin{proof}
Theorem \eqref{MB} implies that $O^*(r)$ admits a flat left invariant  Lorentz metric. Moreover, this metric is given by the scalar product  on  $\mathfrak{o}^*(r)$ defined  by \eqref{estructuraortogonal} and the  left symmetric product is given by \eqref{productoYBC}. It can be verified that the corresponding connection is the Levi-Civita connection. 
\\ 
A symplectic form on Lie group $O^*(r)$ is given by the non degenerate scalar 2-cocycle of the algebra $\mathfrak{o}^*$ as follows
\begin{gather*}
\omega(e_1^*,e_2^*)=\omega(e_0^*,d^*)=1\quad\mbox{y}\quad \mathsf{Vect}_{\R}\{e_1^*,e_2^*\}\bot_{\omega}  \mathsf{Vect}_{\R}\{e_0^*,d^*\}
\end{gather*}
for $x,y,z\in\mathfrak{o}^*(r)$. The formula 
\begin{equation}\label{productosimplectico}
\omega(xy,z)=-\omega(y,[x,z])
\end{equation}
defines a left symmetric product compatible with the Lie bracket of  $\mathfrak{o}^*(r).$\\
The non zero products defined by \eqref{productosimplectico} are given by  
\begin{align*}
e_0^*e_2^*&=-\alpha_3e_1^*,& e_1^*e_1^*&=-\alpha_3d^*,&
e_2^*e_2^*&=-\alpha_3d^*,\\
e^*_0e^*_0&=-\alpha_2e_2^*-\alpha_1e_0^*,& e_0^*e_1^*&=\alpha_3e_2^*.
\end{align*}
The theorem follows from the above statements.
\end{proof}
\begin{remark}
	The left symmetric product on $\mathfrak{o}^*(r)$ given in \eqref{productosimplectico} defines a complete flat affine left invariant  connection on $O^*(r)$ which is not compatible with the symplectic form $\omega$.
\end{remark}
\begin{remark}[Double Lie groups of the oscillator]
	The double Lie algebra of $\mathfrak{o}$ relative to a solution $r$ of the CYBE given above, correspond to the vector space $$\mathcal{D}(\mathfrak{o},r)=\mathfrak{o}^*\oplus\mathfrak{o}=\mathsf{Vect}_{\R}\{e_0,e_1.e_2,d,e_0^*,e_1^*,e_2^*,d^*\}$$  
	with Lie brackets (non zero and except antisymmetries) 	
	\begin{align*}
	[e_1,e^*_0]&=-\alpha_3e_2-e^*_2,&[e_2,e_0^*]&=\alpha_3e_1+e_1^*,&[d,e_0^*]&=-\alpha_2e_1+\alpha_1e_2,\\
	[e_2,e_1^*]&=-\alpha_3e_0-d^*,&[d,e_1^*]&=\alpha_2e_0-e_2^*,&[e_1,e_2^*]&=\alpha_3e_0+d^*,\\
	[d,e_2^*]&=-\alpha_1e_0-e_1^*,&[e_0^*,e_1^*]&=-\alpha_2d^*+\alpha_3e_2^*,&[e_0^*,e_2^*]&=\alpha_1d^*-\alpha_3e_1^*,\\
	[e_1,e_2]&=e_0,&[d,e_1]&=e_2,&[d,e_2]&=-e_1.
	\end{align*}
\end{remark}

\section{Development of flat affine manifold}
This section is devoted to the description of the developing map  corresponding to each flat affine left invariant  structure on $\mathsf{Aff}(\R)$ found in \cite{MS} and to each flat affine structure on the oscillator group $O$   exhibited in Section 2.
The method we will apply has been introduced by J.L. Koszul in \cite{K2}.

\begin{proposition}\label{desarrollante}
Let $(M,\nabla)$ be a simply connected flat affine manifold  of dimension $n$. Then there exist an affine immersion  $D:(M,\nabla)\rightarrow(\R^n,\nabla^0)$ and a Lie group  homomorphism $A:\mathsf{Aff}(M,\nabla)\rightarrow\mathsf{Aff}(\R^n,\nabla^0)$ such that the following diagram commutes
	\begin{equation}\label{diagrama}
		\begin{tikzcd}
			M\arrow{r}{D}\arrow{d}{F}&\R^n\arrow{d}{A(F)}\\
			M\arrow{r}{D}&\R^n
		\end{tikzcd}
	\end{equation}
	Moreover, $A$ is an immersion.  
\end{proposition}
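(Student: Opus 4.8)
The plan is to construct the developing map $D$ from a parallel frame field and then to obtain $A$ from the rigidity of developing maps, i.e. their uniqueness up to an affine automorphism of $\R^n$.

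First I would exploit that $\nabla$ is flat and $M$ simply connected, so that parallel transport is path-independent: transporting a fixed frame at a base point yields a global $\nabla$-parallel frame field $(X_1,\dots,X_n)$ on $M$, equivalently a global section $\sigma\colon M\to L(M)$ whose image is a leaf of the horizontal distribution $H$ of $\Gamma$, so that $\sigma^{*}\omega=0$. I would then set $\eta:=\sigma^{*}\theta$, an $\R^{n}$-valued $1$-form on $M$; unwinding the definition of $\theta$, the vector $\eta_{x}(X_{i}(x))$ is the $i$-th element of the standard basis of $\R^{n}$. Pulling back the structure equation \eqref{estructural1} and using that the torsion form vanishes together with $\sigma^{*}\omega=0$ gives $d\eta=0$; equivalently, since $\nabla$ is torsion free and the $X_i$ are parallel, $[X_i,X_j]=\nabla_{X_i}X_j-\nabla_{X_j}X_i=0$, hence $d\eta(X_i,X_j)=-\eta([X_i,X_j])=0$. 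As $M$ is simply connected the closed form $\eta$ is exact, $\eta=dD$ for a smooth $D\colon M\to\R^{n}$, unique up to an additive constant. By construction $dD_{x}$ carries the parallel frame $(X_i(x))$ to the standard basis of $\R^{n}$, hence is an isomorphism, so $D$ is an immersion; and since it sends the $\nabla$-parallel frame $(X_i)$ to the $\nabla^{0}$-parallel coordinate frame $(\partial/\partial x_1,\dots,\partial/\partial x_n)$ of $\R^{n}$, it satisfies $D^{*}\nabla^{0}=\nabla$, i.e. $D$ is an affine immersion.

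Next I would prove the rigidity lemma: two affine immersions $D_1,D_2\colon(M,\nabla)\to(\R^{n},\nabla^{0})$ of a connected $M$ differ by a unique $g\in\mathsf{Aff}(\R^{n},\nabla^{0})$, with $D_2=g\circ D_1$. Indeed each $D_i$ is a local diffeomorphism, so on suitable opens $D_2\circ D_1^{-1}$ is a connection-preserving local diffeomorphism of $(\R^{n},\nabla^{0})$, hence the restriction of an element of $\mathsf{Aff}(\R^{n},\nabla^{0})$; these local affine maps agree on overlaps (two affine maps coinciding on a nonempty open set are equal), and since $M$ is connected they glue to a single $g$, which is invertible because $D_1,D_2$ are local diffeomorphisms. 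Now, for $F\in\mathsf{Aff}(M,\nabla)$ the composite $D\circ F$ is again an affine immersion into $(\R^{n},\nabla^{0})$, since $(D\circ F)^{*}\nabla^{0}=F^{*}D^{*}\nabla^{0}=F^{*}\nabla=\nabla$; hence there is a unique $A(F)\in\mathsf{Aff}(\R^{n},\nabla^{0})$ with $D\circ F=A(F)\circ D$, which is exactly the commutativity of \eqref{diagrama}, uniqueness holding because $D(M)$ is a nonempty open subset of $\R^{n}$. From $D\circ(F_1F_2)=A(F_1)\circ D\circ F_2=A(F_1)\circ A(F_2)\circ D$ and this uniqueness one gets $A(F_1F_2)=A(F_1)A(F_2)$ and $A(\mathrm{id})=\mathrm{id}$, so $A$ is a group homomorphism.

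Finally I would check that $A$ is a smooth immersion. Here $\mathsf{Aff}(M,\nabla)$ is a Lie group acting smoothly and effectively on $M$; choosing $x_0,\dots,x_n\in M$ with $D(x_0),\dots,D(x_n)$ affinely independent (possible since $D(M)$ is open), an element of $\mathsf{Aff}(\R^{n},\nabla^{0})$ depends smoothly on the images of $n+1$ affinely independent points, and since $A(F)$ sends $D(x_i)$ to $D(F(x_i))$ while $F\mapsto F(x_i)$ and $D$ are smooth, $A$ is smooth near $\mathrm{id}$, hence everywhere by the homomorphism property. For injectivity of $dA$ it suffices, $A$ being a homomorphism, to check it at $\mathrm{id}$: if $\xi$ lies in the Lie algebra of $\mathsf{Aff}(M,\nabla)$, with induced vector field $\xi_M$ on $M$, differentiating $D\circ\exp(t\xi)=A(\exp(t\xi))\circ D$ at $t=0$ shows $\xi_M$ is $D$-related to the vector field on $\R^{n}$ induced by $dA_{e}(\xi)$; if $dA_{e}(\xi)=0$ then $D_{*}\xi_M=0$, and $D$ being an immersion forces $\xi_M=0$, whence $\xi=0$ by effectiveness of the action. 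Thus $dA_{e}$ is injective, and translating by $A\circ L_{F}=L_{A(F)}\circ A$ gives injectivity of $dA_{F}$ for every $F$, so $A$ is an immersion. I expect the main obstacle to be the rigidity lemma — the gluing of the local affine maps of $(\R^{n},\nabla^{0})$ and the invertibility of $g$ — on which both the definition of $A$ and its being a homomorphism rest; once that is in place, the existence and immersivity of $D$, and the immersion property of $A$, follow routinely from flatness, simple connectedness, the structure equations, and the fact that $D$ is a local diffeomorphism.
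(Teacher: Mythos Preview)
Your proof is correct, and the construction of $D$ is essentially the same as the paper's: both build a global parallel frame on $M$ from flatness and simple connectedness, assemble the associated $\R^n$-valued closed $1$-form $\eta$, and integrate it to obtain $D$ with $dD=\eta$. The paper phrases this via parallel vector fields $X^v$ with $\eta(X^v)=v$ and checks affineness by showing $D$ sends geodesics to straight lines, while you use the section $\sigma$ of $L(M)$ and the pullback of the canonical form; these are equivalent.

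Where you genuinely diverge is in the treatment of $A$. The paper defines $A(F)=(Q_F,L_F)$ explicitly, with $Q_F=D(F(p))$ and $L_F$ given by the action of $F_*$ on parallel vector fields, and then verifies the homomorphism property by a direct integral computation showing $F^*\eta=L_F\circ\eta$. You instead invoke a rigidity lemma (any two affine immersions into $(\R^n,\nabla^0)$ differ by a unique element of $\mathsf{Aff}(\R^n)$) and obtain both the existence of $A(F)$ and the homomorphism property from uniqueness; this is cleaner and more conceptual, at the cost of needing the gluing argument for the local affine maps. For the immersion property of $A$, the paper uses a commutative square through the frame bundles $L(M)$ and $L(\R^n)$, citing Kobayashi for the fact that $\Phi,\Psi$ are embeddings, whereas you differentiate the equivariance relation $D\circ F=A(F)\circ D$ and use that $D$ is an immersion together with effectiveness of the $\mathsf{Aff}(M,\nabla)$-action. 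Your argument is more self-contained and avoids the frame-bundle machinery; the paper's is shorter but relies on an external reference.
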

\begin{proof}
Let $p\in M$ and $v\in T_p(M).$
For a curve $\alpha$ of class $C^1$ on $M$ such that $\alpha(0)=p$ there exist a unique smooth vector field $X(\alpha)$ along $\alpha$ that is  parallel relative to $\nabla$ such that $X(\alpha)_p=v.$ Since $M$ is connected and it has an affine atlas, i.e., the transition maps are local affine transformations of open sets of $\R^n,$ we can extend $X(\alpha)$ to a smooth vector field $X^v$ of $M$ such that $X_p^v=v,$ and $\nabla X^v=0.$ 
\\
Let $\eta$ be a differential  1-form on $M$ with values in $T_p(M)$ such that $\eta(X^v)=v.$ We shall see that $\eta$ is closed. Notice that 
	\begin{align*}
	(\nabla\eta)(X^v,Y)&=Y(\eta(X^v))-\eta(\nabla_YX^v)=0
	\end{align*}
for all $Y\in\mathfrak{X}(M).$ It follows that $\nabla\eta=0$ and therefore 
	\begin{align*}
	0&=(\nabla\eta)(X,Y)-(\nabla\eta)(Y,X)\\
	&=Y(\eta(X))-\eta(\nabla_YX)-X(\eta(Y))+\eta(\nabla_XY)\\
	&=Y(\eta(X))-X(\eta(Y))-\eta(\nabla_YX-\nabla_XY)\\
	&=Y(\eta(X))-X(\eta(Y))-\eta([Y,X])\\
	&=2\di\eta(Y,X)
	\end{align*}
for all $X,Y\in\mathfrak{X}(M).$ \\
Since $M$ is simply connected, there exist a unique differential map  $D$ from $M$ in $T_p(M)$ such that $\di D=\eta.$ In fact, taking  a smooth curve $\tau$ in $M$ from $p$ to $q,$ it follows from Stokes's theorem that $\int_{\tau}\eta$ does not depend of the curve. Hence the following vector is well defined
	\begin{equation}\label{DA}
	D(x)=\int_p^x\eta.
	\end{equation}
Now we will show that $D$ is an affine immersion from $M$ in $T_p(M)$.  Let $\sigma(t)$ be a smooth curve on $M,$ since
	\begin{align*}
	\dot{\sigma}(t)(\eta(\dot{\sigma}(t)))=\eta(\nabla_{\dot{\sigma(t)}}\dot{\sigma}(t)),
	\end{align*} 
it follows that $\sigma$ is a geodesic if and only if $\eta(\dot{\sigma}(t))$ is constant. Then  the geodesics of $M$ are curves $\sigma$ such that the map $\eta\circ\sigma$ is constant.\\
For a geodesic $\sigma(t)$ on $(M,\nabla)$ we have the following  
	\[
	D(\sigma(t))=\int_p^{\sigma(0)} \eta+\int_{\sigma(0)}^{\sigma(t)} \eta=\int_p^{\sigma(0)} \eta+\int_{\sigma(0)}^t \eta(\dot{\sigma}(t))\di t=\eta(\dot{\sigma}(t))t+\int_p^{\sigma(0)} \eta
	\]
Hence, $D(\sigma(t))$ is a line on $T_p(M),$ i.e., $D:(M,\nabla)\rightarrow (T_pM,\nabla^0)$ is an affine map. Since $\di D=\eta,$ the rank of  $D$ is constant and equal to  $n=\dim M$ in every point, thats means $D$ is an immersion.\\
Next we will prove that  the following map is a Lie group homomorphism 
	\begin{align*}
	\begin{array}{rccl}
	A:&\mathsf{Aff}(M,\nabla)&\rightarrow&\mathsf{Aff}(\R^n,\nabla^0)\\
	&F&\mapsto&(Q_F,L_F)
	\end{array}
	\end{align*}
First we will see that the map is well defined. Let $L_F(v)=u=F_{\ast,F^{-1}(p)}\left(X^v_{F^{-1}(p)}\right)$, with  $v\in T_p(M)$, and $Q_F=D(F(p))$.
Since $\nabla F_{\ast}(X^v)=0$ we have that  $F_{\ast}(X^v)=X^u,$ which means that $L_F$ is $\R$-linear.\\
Now, we will show that $A(F\circ G)=A(F)\circ A(G)$ for all $F,G\in\mathsf{Aff}(M,\nabla).$\\
It directly follows from definition that $L_{F\circ G}=L_F\circ L_G$. Moreover 
	\begin{align}
	\begin{split}\label{integralconmuta1}
	Q_{F\circ G}&=D(F(G(p)))=\int_{\tau}\eta,\quad\mbox{where $\tau$ is a curve from $p$ to $F(G(p))$}\\
	&=\int_{\alpha}\eta+\int_{\alpha'}\eta,\quad\mbox{with $\alpha$ a curve from $p$ to $F(p)$}\\
	&\hspace{3.5 cm}\mbox{and $\alpha'$ is a curve from $F(p)$ to $F(G(p))$}\\
	&=D(F(p))+\int_{F\circ\beta}\eta,\quad\mbox{ $\beta$ is a curve from $p$ to $G(p))$}\\
	&=Q_F+\int_{\beta}F^{\ast}\eta
	\end{split}
	\end{align}
	On the other hand,
	\begin{align}
	\begin{split}\label{integralconmuta2}
	(L_F\circ\eta)_p(X^v_p)&=L_F(\eta_p(X^v_p))\\
	&=L_F(v)\\
	&=u\\
	&=\eta_{F(p)}(X^u_p)\\
	&=\eta_{F(p)}(F_{\ast,p}(X^v_p))=(F^{\ast}\eta)_p(X^v_p)
	\end{split}
	\end{align}
	It follows that   $\int_{\beta}F^{\ast}\eta=L_F(Q_G),$ hence, from   \eqref{integralconmuta1} we get  that $A$ is a group  homomorphism.\\ 
	Now notice that for $q\in M$ we have
	\begin{align*}
	D(F(q))&=\int_{\tau}\omega,\quad\mbox{where $\tau$ is a curve from $p$ to $F(q)$}\\
	&=Q_F+L_F(D(q))\\
	&=A(F)(D(q)),
	\end{align*}
	that is,  \eqref{diagrama} is a commutative diagram.\\
	Finally we prove that $A$ is an immersion. First notice that the following diagram is commutative 
	\begin{equation}
	\begin{tikzcd}
	\mathsf{Aff}(M,\nabla)\arrow{r}{A}\arrow{d}{\Phi}&\mathsf{Aff}(\R^n,\nabla^0)\arrow{d}{\Psi}\\
	L(M)\arrow{r}{\widetilde{D}}&L(\R^n)
	\end{tikzcd}
	\end{equation} 
where $\Phi$ maps an affine transformation $F$ of $(M,\nabla)$ in  the linear frame $\widetilde{F}(u_0)$ with $u_0$ a linear frame of $M$ at $p$ fixed and $\Psi$ maps the affine transformation $T$ of $(\R^n,\nabla^0)$ in the linear frame $\widetilde{T}(u_1)$ with $u_1=\widetilde{D}(u_0)$ linear frame of $\R^n$ at $0.$ Since   $\Phi$ and $\Psi$ are embeddings (see page 42 \cite{K}), they are, in particular, immersions. Moreover as $D$ is immersion its natural lift is also an immersion. Therefore $A$ is an immersion.
\end{proof}
The map $D:M\rightarrow T_p(M)$ defined in the previous proposition is called \textbf{\textit{developing map}} of $(M,\nabla)$ at $p$.\\
\\
For the following proposition we will consider $M=\mathsf{Aff}(\R)_0$  with the flat affine structures $\nabla_2(\alpha)$ and $\nabla_1(\alpha)$ respectively given by  $\mathcal{F}_2(\alpha)$ and $\mathcal{F}_1(\alpha)$ (see \cite{MS}).

\begin{proposition}
The corresponding developing maps of $\mathsf{Aff}(\R)_0$ with values in  $\mathsf{aff}(\R)$ relative to $\nabla_2(\alpha)$ and $\nabla_1(\alpha)$ are respectively given by
	\begin{align*}
		D_{2,\alpha}(x,y)&=\left\{\begin{array}{ll}
			\left(\frac{1}{\alpha}(x^{\alpha}-1)\ ,\ x^{\alpha}(y+1)-\frac{1}{\alpha+1}(\alpha x^{\alpha+1}+1)\right)& \mbox{if $\alpha\neq0,-1$}\\
			\\
			\left(1-\frac{1}{x}\ ,\ \frac{1+y}{x}-1+\ln x\right)& \mbox{if $\alpha=-1$}\\
			\\
			(\ln(x)\ ,\ y)& \mbox{if $\alpha=0$}
		\end{array}\right.
	\end{align*}
	\begin{align*}
		D_{1,\alpha}(x,y)&=\left\{\begin{array}{ll}
			\left(\frac{1}{\alpha}(x^{\alpha}-1)\ ,\ y+\frac{1}{\alpha-1}(x^{\alpha}-\alpha x)+1\right)& \mbox{if $\alpha\neq0,1$}\\
			\\
			\left(x-1\ ,\ y+1+x(\ln x-1)\right)& \mbox{if $\alpha=1$}
		\end{array}\right.
	\end{align*}
\end{proposition}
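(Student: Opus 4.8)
The plan is to compute the developing maps explicitly by unwinding the construction in Proposition~\ref{desarrollante}: fix the base point $p$ to be the identity $(1,0)$ of $\mathsf{Aff}(\R)_0$ (thought of as the group of maps $t\mapsto xt+y$ with $x>0$), identify $T_p(M)$ with the Lie algebra $\mathsf{aff}(\R)$ via the basis $(e_1,e_2)$ corresponding to the left-invariant vector fields, and then determine the $1$-form $\eta$ with values in $T_p(M)$ characterised by $\eta(X^{e_i})=e_i$, where $X^{e_i}$ is the parallel extension of $e_i$. Since the connection is \emph{left invariant}, the parallel vector fields are not the left-invariant ones (unless the connection is trivial); rather, $X^{v}$ is obtained by parallel transport, and the condition $\nabla X^v=0$ translates, in the left-invariant frame $(e_1^+,e_2^+)$, into a first-order linear ODE system whose coefficients are read off from the left-symmetric products $\mathcal{F}_1(\alpha)$, $\mathcal{F}_2(\alpha)$ restricted to $\mathsf{aff}(\R)=\mathsf{Vect}\{e_1,e_2\}$ (the relevant $2\times2$ sub-tables: for $\mathcal{F}_2(\alpha)$ one has $e_1e_1=0$, $e_1e_2=0$, $e_2e_1=0$, $e_2e_2=\alpha e_2$ in suitable labelling, and similarly for $\mathcal{F}_1(\alpha)$). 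Concretely, write $\eta=\eta^1\,e_1+\eta^2\,e_2$ with $\eta^i$ scalar $1$-forms on $M$; the closedness $\di\eta=0$ (already proved in general in the proposition) together with the normalisation at $p$ pins down $\eta^1,\eta^2$ uniquely, and then $D(x,y)=\int_{p}^{(x,y)}\eta$ is computed by integrating along, say, the path first in the $x$-direction then in the $y$-direction.

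The \textbf{key steps}, in order, are: (i) write down the coframe dual to $(e_1^+,e_2^+)$ on $\mathsf{Aff}(\R)_0$ in the coordinates $(x,y)$ — this is elementary since the group law is $(x,y)(x',y')=(xx',xy'+y)$ — obtaining left-invariant $1$-forms $\theta^1=\di x/x$ and $\theta^2=\di y/x$ (up to the chosen identification); (ii) express the parallel-transport condition $\nabla_{e_j^+}X^v=0$ using $\nabla_{e_i^+}e_j^+=e_i e_j$ from the product table, giving the connection $1$-form $\omega$ in the left-invariant frame as a matrix of left-invariant $1$-forms, hence $\eta$ as the unique flat section normalised at $p$; (iii) solve the resulting ODE — for $\mathcal{F}_2(\alpha)$ the system decouples and produces the factor $x^\alpha$ (and its degenerations $\ln x$ when $\alpha=0$, and the $1/x$ behaviour when $\alpha=-1$), while for $\mathcal{F}_1(\alpha)$ the off-diagonal term in $L_d$ produces the mixed term $\tfrac{1}{\alpha-1}(x^\alpha-\alpha x)$ with the confluent case $\alpha=1$ giving $x(\ln x-1)$; (iv) integrate $\eta$ from $p=(1,0)$ to $(x,y)$ and read off the two components, checking the normalisation $D(1,0)=(0,0)$ and that $\di D=\eta$; (v) verify directly that $D$ maps the one-parameter subgroups (the geodesics through $p$) to affine lines, as guaranteed abstractly by Proposition~\ref{desarrollante}, which serves as a consistency check on the formulas.

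The \textbf{main obstacle} is step (iii): the parallel sections solve a non-autonomous linear system along curves in $M$, and because the connection is only left-invariant (not bi-invariant), one must be careful that the putative $\eta$ built from solving the ODE along coordinate lines is genuinely the \emph{globally defined} closed $1$-form of the proposition rather than a path-dependent artifact — this is where the closedness argument already in Proposition~\ref{desarrollante} is invoked, but matching it against the explicit integration requires keeping track of the exponents $\alpha$, $\alpha\pm1$ and correctly identifying the three (resp.\ two) confluent cases where repeated roots of the characteristic data force logarithmic terms. A secondary nuisance is purely bookkeeping: fixing once and for all which of $e_1,e_2$ corresponds to the dilation direction and which to the translation direction, so that the product tables $\mathcal{F}_1(\alpha),\mathcal{F}_2(\alpha)$ of Proposition~\ref{PSGE2} restricted to $\mathsf{aff}(\R)$ are transcribed with the correct signs; once that convention is fixed, the computation is routine and yields the stated piecewise formulas for $D_{2,\alpha}$ and $D_{1,\alpha}$.
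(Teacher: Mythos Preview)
Your overall strategy is exactly the paper's: solve $\nabla X=0$ with prescribed value at $p=(1,0)$ to obtain the parallel fields $X^v$, read off the closed $1$-form $\eta$ with $\eta(X^v)=v$, and integrate $\eta$ from $p$ to $(x,y)$ (the paper uses the straight segment $\tau(t)=(1+(x-1)t,\,yt)$ rather than an axis-aligned path, but of course this is immaterial by closedness).

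There is, however, one concrete misstep in your plan. The connections $\nabla_1(\alpha),\nabla_2(\alpha)$ on $\mathsf{Aff}(\R)_0$ are \emph{not} obtained by restricting the $\mathfrak{e}(2)$ products $\mathcal{F}_1(\alpha),\mathcal{F}_2(\alpha)$ of Proposition~\ref{PSGE2} to a two-dimensional subspace: $\mathsf{aff}(\R)$ is not a subalgebra of $\mathfrak{e}(2)$ (the brackets $[e_1,e_2]$ are $0$ in $\mathfrak{e}(2)$ but $e_2$ in $\mathsf{aff}(\R)$), and in any case the $\alpha$-dependence in those tables sits in the $d$-row, which would disappear upon such a restriction. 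The relevant left-symmetric products on $\mathsf{aff}(\R)$ come from \cite{MS}; the reuse of the symbols $\mathcal{F}_i(\alpha)$ in the sentence preceding the proposition is an unfortunate notational collision. Once you feed the correct $2\times2$ tables from \cite{MS} into your step~(iii), the ODEs $\nabla_{e_i^+}X=0$ yield precisely the parallel fields the paper writes down (e.g.\ $X=ax^{1-\alpha}\partial_x+(b+\alpha a(x-y-1))x^{-\alpha}\partial_y$ for $\nabla_2(\alpha)$), and the rest of your outline goes through verbatim.
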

\begin{proof}
\begin{enumerate}
	\item 
	Let $X$ be a smooth vector field on $\mathsf{Aff}(\R)_0$ such that  $\nabla_2(\alpha)X=0.$   The following identities can be verified
	\begin{gather}
		\nabla_2(\alpha)_{e_1^+} X=0\label{ecudif1}\\
		\nabla_2(\alpha)_{e_2^+} X=0\label{ecudif2}
	\end{gather}
	Solving \eqref{ecudif1} and \eqref{ecudif2} with initial conditions $X_{(1,0)}=(a,b)\in\mathfrak{g},$ we have 
	\begin{align*}
		X=ax^{1-\alpha}\dx+(b+\alpha a(x-y-1))x^{-\alpha}\dy
	\end{align*}
	It is easy to verify that the closed 1-form $\eta$ on $\mathsf{Aff}(\R)_0$ with values in $\mathfrak{g}$ such that $\eta(X)=(a,b),$ is given by
	\begin{align*}
		\eta=\left(x^{\alpha-1}\di x,\ \alpha(y-x+1)x^{\alpha-1}\di x+x^{\alpha}\di y \right)
	\end{align*}
	Taking de curve $\tau(t)=(1+(x-1)t, yt),$ with $t\in[0,1],$ the  developing map is written as follows
	\begin{enumerate}
		\item if $\alpha\neq0,-1.$
		\begin{align*}
			&D_{2,\alpha}(x,y)=\int_{\tau}\eta\\
			&=\left(\int_0^1(1+(x-1)t)^{\alpha-1}(x-1)\dt, \int_0^1\alpha(x-1)(y-x+1)t(1+(x-1)t)^{\alpha-1}\right.\\
			&\hspace{9.2cm}+(1+(x-1)t))^{\alpha}y \ \dt\bigg)\\
			&=\left(\frac{1}{\alpha}(x^{\alpha}-1),\ x^{\alpha}(y+1)-\frac{1}{\alpha+1}(\alpha x^{\alpha+1}+1)\right)
		\end{align*}
		\item For case $\alpha=-1:$ 
		\begin{align*}
			D_{2,-1}(x,y)&=\int_{\tau}\eta\\
			&=\left(\int_0^1\frac{x-1}{(1+(x-1)t)^2}\dt,\int_0^1\frac{(x-y-1)(x-1)t}{(1+(x-1)t)^2}\dt+\frac{y}{1+(x-1)t}\dt\right)\\
			&=\left(1-\frac{1}{x},\frac{1+y}{x}-1+\ln x\right)
		\end{align*}
		\item  when $\alpha=0:$ 
		\begin{align*}
			D_{2,0}(x,y)&=(\ln(x)\ ,\ y)
		\end{align*}
	\end{enumerate}
	\item If $X\in\mathfrak{X}(\mathsf{Aff}(\R)_0)$ such that $X_{(1,0)}=(a,b)\in\mathfrak{g},$ and
	\begin{gather*}
		\nabla_1(\alpha)_{e_1^+}X=0\\
		\nabla_1(\alpha)_{e_2^+}X=0
	\end{gather*}
	it can be verified that $X$ is as follows 
	\begin{align*}
		X=\left\{
		\begin{array}{ll}
			ax^{1-\alpha}\dx+\left(b+\frac{a\alpha}{\alpha-1}(x^{1-\alpha}-1)\right)\dy&\mbox{si $\alpha\neq1$}\\
			\\
			a\dx+(b-a\ln x)\dy&\mbox{si $\alpha=1$}
		\end{array}
		\right.
	\end{align*}
	Hence the 1-form $\eta$ on $\mathsf{Aff}(\R)_0$ verifying $\eta(X)=(a,b),$ is given by
	\begin{align*}
		\eta=
		\left\{
		\begin{array}{ll}
			\left(x^{\alpha-1}\di x\ ,\ \frac{\alpha}{\alpha-1}(x^{\alpha-1}-1)\di x+\di y\right)&\mbox{if $\alpha\neq1$}\\
			\\
			(\di x\ ,\ \ln x\di x+\di y)&\mbox{if $\alpha=1$}
		\end{array}
		\right.
	\end{align*}
	Therefore, the developing map is as follows  
	\begin{align*}
		D_{1,\alpha}(x,y)&=\int_{\tau}\eta\\
		&=\left\{\begin{array}{ll}
			\left(\frac{1}{\alpha}(x^{\alpha}-1)\ ,\ y+\frac{1}{\alpha-1}(x^{\alpha}-\alpha x)+1\right)& \mbox{if $\alpha\neq0,1$}\\
			\\
			\left(x-1\ ,\ y+1+x(\ln x-1)\right)& \mbox{if $\alpha=1$}
		\end{array}\right.
	\end{align*}
\end{enumerate}
\end{proof}


  From now on we will denote by $\nabla_1(t,s),$ $\nabla_2(\alpha,t),$  $\nabla_3$ and $\nabla_4$  the flat affine connections on $O$ respectively given by the products
$\mathcal{F}_1(0,t,s,1/2),$ $\mathcal{F}_2(\alpha,t,1/2),$ $\mathcal{F}_3(1/2)$ y $\mathcal{F}_4(1/2)$.\\

\begin{proposition}
The corresponding developing maps of oscillator $O$ relative to the flat affine connections $\nabla_1(t,s),$ $\nabla_2(\alpha,t),$  $\nabla_3$ and $\nabla_4$ are  respectively 
	\begin{align*}
		D_{1,t,s}(x_1,x_2,x_3,x_4)&=\left(x_1+\frac{t}{2}(x_2^2+x_3^2)+\frac{s}{2}x_4^2,\ x_2,\ x_3,\ x_4 \right)\\
		D_{2,\alpha,t}(x_1,x_2,x_3,x_4)&=\left(x_1+\frac{t}{2}(x_2^2+x_3^2),\ x_2,\ x_3,\ \frac{1}{\alpha}(e^{\alpha x_4}-1) \right)\\
		D_3(x_1,x_2,x_3,x_4)&=\left(x_1,\ x_2,\ x_3,\  x_4+\frac{1}{2}(x_2^2+x_3^2) \right)\\
		D_4&=(F_1,F_2,F_3,F_4)
	\end{align*}
	where $F_i,\ i=1,2,3,4$ are detailed at the end of the proof.
\end{proposition}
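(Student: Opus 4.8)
\emph{Setup.} The plan is to run, for each of the four connections, the construction from the proof of Proposition~\ref{desarrollante} with base point the identity $e=(0,0,0,0)$: produce a global parallel frame on $O$, form the closed $\mathfrak o$-valued $1$-form $\eta$ dual to it in the sense that $\eta(X^v)=v$, and set $D(x_1,x_2,x_3,x_4)=\int_\tau\eta$ along the ray $\tau(t)=(tx_1,tx_2,tx_3,tx_4)$, $t\in[0,1]$ (any path works, $\eta$ being closed). Under \eqref{Eq:basisofleftinvariant} the identification $T_eO\cong\mathfrak o$ reads $e_0^+|_e=\partial_{x_1}$, $e_1^+|_e=\partial_{x_2}$, $e_2^+|_e=\partial_{x_3}$, $d^+|_e=\partial_{x_4}$.

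\emph{Parallel frame.} Write $X=a\,e_0^++b\,e_1^++c\,e_2^++g\,d^+$ with $a,b,c,g\in C^\infty(O)$. Since the connection is left invariant, $\nabla_{e_i^+}e_j^+=e_i^+\cdot e_j^+$ is read off the relevant table ($\mathcal F_1(0,t,s,\tfrac12)$ for $\nabla_1$, and so on), so $\nabla X=0$ becomes a first-order linear PDE system in $(a,b,c,g)$. Because $e_0^+=\partial_{x_1}$, $d^+=\partial_{x_4}$, and $e_1^+,e_2^+$ act by the first-order operators of \eqref{Eq:basisofleftinvariant}, for $\mathcal F_1,\mathcal F_2,\mathcal F_3$ the system untangles: $(b,c)$ satisfies $d^+b=c$, $d^+c=-b$ and is annihilated by $e_0^+,e_1^+,e_2^+$, hence depends only on $x_4$ and is the rotation by $x_4$ of its value at $e$; $g$ is constant for $\mathcal F_1$, obeys $d^+g=-\alpha g$ for $\mathcal F_2$ (where $d\cdot d=\alpha d$) so that $g=g_0e^{-\alpha x_4}$, which produces the factor $\tfrac1\alpha(e^{\alpha x_4}-1)$, and is recovered from $b,c$ by a quadrature for $\mathcal F_3$ (where $e_1\cdot e_1=e_2\cdot e_2=d$); and $a$ is in each case obtained by a final quadrature. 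Imposing $X|_e=v$ fixes the constants and produces the parallel frame $(X^{e_0},X^{e_1},X^{e_2},X^{d})$ with $X^{w}|_e=w$.

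\emph{The form $\eta$ and the integral.} Let $\eta$ be the $\mathfrak o$-valued $1$-form dual to this parallel frame; $\di\eta=0$ is part of Proposition~\ref{desarrollante} (it also follows from $\nabla\eta=0$ and torsion-freeness). When $\eta$ is expressed in the coordinate coframe the rotational factors from the parallel frame cancel against those of \eqref{Eq:basisofleftinvariant}, so $\eta(\dot\tau(t))$ is componentwise polynomial in $t$ (times at most one exponential, for $\mathcal F_2$); the elementary integral $\int_0^1\eta(\dot\tau(t))\,\dt$ then yields $D_{1,t,s}$, $D_{2,\alpha,t}$, $D_3$ as stated. Each $D$ is an affine immersion directly from $\di D=\eta$ and Proposition~\ref{desarrollante}. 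A useful consistency check is diagram \eqref{diagrama} with $F=L_\sigma$, legitimate since the connections are left invariant: it forces $D$ to intertwine left translation on $O$ with an affine $O$-action on $\R^4$, which one verifies on the formulas above.

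\emph{Main obstacle.} The case $\mathcal F_4(\tfrac12)$ carries the computational weight: there $L_{e_0},L_{e_1},L_{e_2},L_d$ all mix $e_1,e_2,d$ (and $e_0$), so the system for $(a,b,c,g)$ does not triangularize --- $(b,c,g)$ satisfy a genuinely coupled linear system whose coefficients are the $\mathsf{Vect}\{e_1,e_2,d\}$-blocks of $L_{e_1},L_{e_2},L_d$ from the $\mathcal F_4$ table, and only after integrating it (diagonalizing those blocks) and doing the closing quadrature for $a$ does one obtain the parallel frame and hence $\eta$. Integrating that $\eta$ along $\tau$ gives the four components $F_1,F_2,F_3,F_4$, which I would list explicitly at the end of the proof; completeness and immersivity of $D_4$ follow exactly as before, so no new idea is needed beyond this calculation.
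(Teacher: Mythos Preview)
Your approach is the paper's: apply the Koszul construction of Proposition~\ref{desarrollante} with base point $e=(0,0,0,0)$, solve $\nabla X=0$ using the left-symmetric table to get a parallel frame, form the dual $\mathfrak o$-valued closed $1$-form $\eta$, and integrate along the ray $\tau(t)=(tx_1,tx_2,tx_3,tx_4)$. The only cosmetic difference is that you expand $X$ in the left-invariant frame $(e_0^+,e_1^+,e_2^+,d^+)$ and then observe that the $x_4$-rotations cancel against those of \eqref{Eq:basisofleftinvariant}, while the paper works in the coordinate frame $(\partial_{x_1},\dots,\partial_{x_4})$ from the start and obtains directly, e.g.\ for $\nabla_2(\alpha,t)$, $X=(a_1-a_2tx_2-a_3tx_3)\partial_{x_1}+a_2\partial_{x_2}+a_3\partial_{x_3}+a_4e^{-\alpha x_4}\partial_{x_4}$ and $\eta=(\di x_1+tx_2\di x_2+tx_3\di x_3,\di x_2,\di x_3,e^{\alpha x_4}\di x_4)$; the two routes are equivalent and yield the same $\eta$ and the same integral.
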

\begin{proof}
\begin{enumerate}
	\item Let $X\in\mathfrak{X}(O)$ such that $X_{(0,0,0,0)}=(a_1,a_2,a_3,a_4)$ with
	\begin{align*}
		\nabla_2(\alpha,t)_{e_i^+}X&=0,\quad\mbox{for $i=0,1,2$}\\
		\nabla_2(\alpha,t)_{d^+}X&=0,
	\end{align*}
	where $(e_0^+,e_1^+,e_2^+,d^+)$ is the basis of left invariant vector fields on $O$ defined in \eqref{Eq:basisofleftinvariant}.\\
	These equations define a  system of partial differential equations  whose solutions determine the vector field 
	$$X=(a_1-a_2tx_2-a_3tx_3)\dxu+a_2\dxd+a_3\dxt+a_4e^{-\alpha x_4}\dxc.$$
	It is easy to verify that the 1-form $\eta$ such that $\eta(X)=a$ with $a$ in $\mathfrak{o},$ is written
	$$\eta=(\di x_1+tx_2\di x_2+tx_3\di x_3,\di x_2,\di x_3,e^{\alpha x_4}\di x_4)$$
	Hence the  developing map is as follows
	\begin{align*}
		D_{2,\alpha,t}(x_1,x_2,x_3,x_4)&=\int_{\tau}\eta\\
		&=\left(\int_0^1x_1\di r+tx_2^2r\di r+tx_3^2r\di r,\int_0^1x_2\di r,\int_0^1x_3\di r,\int_0^1x_4e^{\alpha x_4r}\di r\right)\\
		&=\left(x_1+\frac{t}{2}(x_2^2+x_3^2),\ x_2,\ x_3,\ \frac{1}{\alpha}(e^{\alpha x_4}-1) \right)
	\end{align*}
	\item The smooth vector field $X$ on $O$ such that $X_{(0,0,0,0)}=(a_1,a_2,a_3,a_4)$ with $\nabla_1(t,s)X=0,$ can be written as
	$$X=(a_1-a_2tx_2-a_3tx_3-a_4sx_4)\dxu+a_2\dxd+a_3\dxt+a_4\dxc.$$
	On the other hand, the 1-form on $O$ such that $\eta(X)=(a_1,a_2,a_3,a_4)\in\mathfrak{o},$  is given by
	$$\eta=(\di x_1+tx_2\di x_2+tx_3\di x_3+sx_4\di x_4,\di x_2,\di x_3,\di x_4)$$  
	Taking a smooth curve $\tau$ on $O$ like $\tau(r)=(x_1r,x_2r,x_3r,x_4r)$ with $r\in[0,1],$ we obtain the  developing map 
	\begin{align*}
		D_{1,t,s}(x_1,x_2,x_3,x_4)&=\int_{\tau}\eta\\
		&=\left(\int_0^1x_1\di r+tx_2^2r\di r+tx_3^2r\di r+sx_4^2r\di r,\int_0^1x_2\di r,\int_0^1x_3\di r,\int_0^1x_4\di r\right)\\
		&=\left(x_1+\frac{t}{2}(x_2^2+x_3^2)+\frac{s}{2}x_4^2,\ x_2,\ x_3,\ x_4 \right)
	\end{align*}
	\item Let $X$ be a smooth vector field on $O$ satisfying  $X_{(0,0,0,0)}=(a_1,a_2,a_3,a_4)$ and 
	\begin{align*}
		\nabla_{3,e_i^+}X&=0,\quad\mbox{para $i=0,1,2$}\\
		\nabla_{3,d^+}X&=0
	\end{align*}
	whose solutions determine the vector field 
	$$X=a_1\dxu+a_2\dxd+a_3\dxt+(a_4-a_2x_2-a_3x_3)\dxc.$$
	The 1-form $\eta,$ which $\eta(X)=(a_1,a_2,a_3,a_4)\in\mathfrak{o},$ is written
	$$\eta=(\di x_1,\di x_2,\di x_3,x_2\di x_2+x_3\di x_3+\di x_4)$$  
	As a consequence we get that the  developing map of $(O,\nabla_3)$ is given by
	\begin{align*}
		D_3(x_1,x_2,x_3,x_4)&=\int_{\tau}\eta\\
		&=\left(\int_0^1x_1\di r,\int_0^1x_2\di r,\int_0^1x_3\di r,\int_0^1x_2^2r +x_3^2r+x_4\di r\right)\\
		&=\left(x_1,\ x_2,\ x_3,\  x_4+\frac{1}{2}(x_2^2+x_3^2) \right)
	\end{align*}
	\item Finally, we calculate the developing map for $(O,\nabla_4).$ A smooth vector field $X$ on $O$ that is parallel relative to $\nabla_4$  with $X_{(0,0,0,0)}=(a_1,a_2,a_3,a_4)$ has the form
	\begin{align*}
		X=&\left(-\frac{\hat{a}_9}{2}\sin(x_4)+\frac{\hat{a}_1}{2}\cos(x_4)-\frac{\hat{a}_2}{2} x_2-\frac{\hat{a}_3}{2}x_3+\hat{a}_4\right)e_0^+\\
		&+\left(\hat{a}_5\sin(x_4)+\hat{a}_6\cos(x_4)+\frac{\hat{a}_3}{2}x_2-\frac{\hat{a}_2}{2}x_3+\frac{\hat{a}_4}{2}\right)e_1^+\\
		&+(\hat{a}_7\sin(x_4)+\hat{a}_8\cos(x_4)+\hat{a}_3x_2-\hat{a}_2x_3+\hat{a}_4)e_2^+\\
		&+\left(\hat{a}_1\sin(x_4)+\hat{a}_9\cos(x_4)-\hat{a}_3x_2+\hat{a}_2x_3-\hat{a}_4\right)d^+
	\end{align*}
	where 
	\begin{align*}
		\hat{a}_1&=\frac{a_2}{2}+a_3+\frac{5a_4}{4}&
		\hat{a}_2&=a_3+a_4\\
		\hat{a}_3&=-a_2-\frac{a_4}{2}&
		\hat{a}_4&=a_1+\frac{a_3}{2}+\frac{a_4}{2}\\
		\hat{a}_5&=\frac{1}{2}\left(-\frac{a_2}{2}+a_3+\frac{3a_4}{4}\right)&
		\hat{a}_6&=\frac{1}{2}\left(a_2+\frac{a_3}{2}+a_4\right)\\
		\hat{a}_7&=-\frac{3a_2}{2}-a_3-\frac{7a_4}{4}&
		\hat{a}_8&=-a_2+\frac{3a_3}{2}+a_4\\
		\hat{a}_9&=a_2-\frac{a_3}{2}
	\end{align*}
	On the other hand, the 1-form on $O$ with values in $\mathfrak{o}$ such that $\eta(X)=(a_1,a_2,a_3,a_4)$ is $\eta=(\omega_1,\ \omega_2,\ \omega_3,\ \omega_4)$ where
	\begin{align*}
		\omega_1&=\di x_1+\frac{1}{2}\left((1-x_3)\cos(x_4)+(1+x_2)\sin(x_4)-\frac{1}{2}\right)\di x_2\\
		&+\frac{1}{2}((1+x_2)\cos(x_4)+(x_3-1)\sin(x_4)-1)\di x_3\\
		&+\frac{1}{2}\left(\left(\frac{3}{2}+x_2-\frac{x_3}{2}\right)\cos(x_4)+\left(\frac{x_2}{2}+x_3-\frac{1}{2}\right)\sin(x_4)-\frac{5}{4}\right)\di x_4\ ,
	\end{align*}
	\begin{align*}
		\omega_2&=\frac{1}{2}\left((1+x_2)\cos(x_4)+\left(\frac{1}{2}+x_3\right)\sin(x_4)+1\right)\di x_2\\
		&+\frac{1}{2}\left(\left(\frac{1}{2}+x_3\right)\cos(x_4)-(x_2+1)\sin(x_4)-\frac{1}{2}\right)\di x_3\\
		&+\frac{1}{2}\left(\left(1+\frac{x_2}{2}+x_3\right)\cos(x_4)+\left(x_3-x_2-\frac{3}{4}\right)\sin(x_4)-1\right)\di x_4\ ,
	\end{align*}
	\begin{align*}
		\omega_3&=\left((x_2-1)\cos(x_4)+\left(\frac{3}{2}+x_3\right)\sin(x_4)+1\right)\di x_2\\
		&+\left(\left(\frac{3}{2}+x_3\right)\cos(x_4)+(1-x_2)\sin(x_4)-\frac{1}{2}\right)\di x_3\\
		&+\left(\left(1+\frac{x_2}{2}+x_3\right)\cos(x_4)+\left(\frac{x_3}{2}-x_2+\frac{7}{4}\right)\sin(x_4)-1\right)\di x_4\ ,
	\end{align*}
	\begin{align*}
		\omega_4&=\left((1-x_2)\cos(x_4)-\left(\frac{1}{2}+x_3\right)\sin(x_4)-1\right)\di x_2\\
		&+\left(\frac{1}{2}-\left(\frac{1}{2}+x_3\right)\cos(x_4)+(x_2-1)\sin(x_4)\right)\di x_3\\
		&+\left(1-\left(\frac{x_2}{2}+x_3\right)\cos(x_4)+\left(x_2-\frac{x_3}{2}-\frac{5}{4}\right)\sin(x_4)\right)\di x_4
	\end{align*}
	Therefore, the corresponding developing map has the following  components
	\begin{align*}
		F_1(x_1,x_2,x_3,x_4)&=x_1-\frac{1}{4} (x_2+5 x_4+2)-\frac{1}{4x_4}(2x_3 x_4+x_3)\\&+\frac{\sin (x_4)}{4x_4^2}
		\left(2x_2^2+2x_2 x_4 (x_4+2)+2x_3^2-x_3 (x_4-6) x_4+3 x_4^2\right)\\
		&-\frac{ \cos (x_4) }{4x_4}\left(2x_2^2+2 x_2 x_4+(2 x_3-1) (x_3+2 x_4)\right)
	\end{align*}
\end{enumerate}
\begin{align*}
	F_2(x_1,x_2,x_3,x_4)&=-\frac{x_4}{2}+\frac{\left(x_2^2+x_3^2\right)}{2 x_4^2} (\cos (x_4)-1)+\\
	&\frac{1}{8 }(4 x_2-2 x_3+2 (x_2+2 x_3+2) \sin (x_4)\\
	&+(4 x_2-4 x_3+3) \cos (x_4)-3)\\
	&+\frac{1}{4 x_4} \left(\left(2 x_2^2+x_3 (2 x_3+3)\right) \sin (x_4)+4 x_3 (\cos (x_4)-1)\right)
\end{align*}
\begin{align*}
	F_3(x_1,x_2,x_3,x_4)&=\left(\frac{x_2^2+x_3^2}{4 x_4^2}+\frac{4x_2-2x_3-7}{4} -\frac{x_2}{x_4}\right)\cos (x_4) \\
	&+ \left(2 x_2^2+x_2 (x_4-4)+2 (x_3 (x_3+x_4+2)+x_4)\right)\frac{\sin (x_4)}{2x_4}\\
	&-\frac{x_2^2+x_3^2}{ x_4^2}+\frac{7-2 x_3}{4}+\frac{x_2}{x_4}+x_2- x_4
\end{align*}
\begin{align*}
	F_4(x_1,x_2,x_3,x_4)&=\left(\frac{2 x_3+5}{4}- \frac{x_2^2+x_3^2}{x_4^2}-x_2\right)\cos (x_4)\\
	&-\left(\frac{x_2^2+x_3^2}{x_4}-\frac{2x_2-x_3}{x_4}+\frac{x_2}{2}+x_3\right)\sin (x_4)\\
	&+\frac{x_2^2+x_3^2}{ x_4^2}+\frac{2 x_3-5}{4}+x_4-x_2
\end{align*}
\end{proof}
\section{Symplectic invariant geometry on the Lie group  $\mathsf{Aff}(\R)$}
In what follows we consider the group of affine transformations of real line $G:=\mathsf{Aff}(\R)$ as a symplectic Lie group. On the natural coordinates of $G$ the left invariant symplectic form it is written as $\omega^+=-\frac{1}{x^2}\di x\wedge\di y.$\\
 
\begin{proposition}[\cite{MS}] There exits  two left invariant symplectic flat affine, non isomorphic, connections $\nabla_1$ and $\nabla_2$ on  $\mathsf{Aff}(\mathbb{R})$. These connections correspond to the left symmetric products on $\mathfrak{g}=$Lie$\mathsf{Aff}(\mathbb{R})$  given by the following tables
	\begin{align}
	&\begin{array}{c|cc}
	&e_1 &e_2\\
	\hline
	e_1&-e_1&e_2\\
	&\\
	e_2&0&0
	\end{array}\label{iso1}\\
	&\begin{array}{c|cc}
	&e_1 &e_2\\
	\hline
	e_1&-\frac{1}{2}e_1&\frac{1}{2}e_2\\
	&\\
	e_2&-\frac{1}{2}e_2&0
	\end{array}\label{iso2}
	\end{align}
\end{proposition}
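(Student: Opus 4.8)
The plan is to recover this classification (attributed to \cite{MS}) by reducing it to a finite linear‑algebra problem on $\mathfrak{g}=\mathrm{Lie}\,\mathsf{Aff}(\R)$, the two‑dimensional non‑abelian Lie algebra $\mathfrak{g}=\mathsf{Vect}_{\R}\{e_1,e_2\}$ with $[e_1,e_2]=e_2$, equipped with the nondegenerate scalar $2$‑cocycle $\omega$ obtained by evaluating $\omega^+$ at the identity (normalize $\omega(e_1,e_2)=1$; the overall sign in $\omega^+=-x^{-2}\,\di x\wedge\di y$ is irrelevant). A left invariant connection $\nabla^+$ is a bilinear product on $\mathfrak{g}$, and by the discussion around \eqref{Eq:leftsymmetricproduct}--\eqref{equliebrakettorsionfree} it is flat and torsion free exactly when this product is left symmetric and satisfies $xy-yx=[x,y]$. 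Since $\omega^+$ and the left invariant vector fields are left invariant, $\nabla^+\omega^+=0$ becomes $\omega(xy,z)+\omega(y,xz)=0$ for all $x,y,z\in\mathfrak{g}$; equivalently every left multiplication $L_x$ lies in $\mathfrak{sp}(\mathfrak{g},\omega)$, which in dimension two is simply the space of traceless endomorphisms. So the task is: classify, up to isomorphism, the left symmetric products on $\mathfrak{g}$ compatible with the bracket for which $L_{e_1}$ and $L_{e_2}$ are traceless.

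Next I would set up coordinates. Writing $L_{e_1}=\left(\begin{smallmatrix}a&c\\ b&d\end{smallmatrix}\right)$, torsion freeness forces $e_2e_1=e_1e_2-e_2$, so $L_{e_2}=\left(\begin{smallmatrix}c&p\\ d-1&q\end{smallmatrix}\right)$ where $e_2e_2=pe_1+qe_2$. The symplectic condition $\mathrm{tr}\,L_{e_1}=\mathrm{tr}\,L_{e_2}=0$ gives $d=-a$ and $q=-c$. Given torsion freeness, the left symmetry identity \eqref{Eq:leftsymmetricproduct} is equivalent to $L_{[e_1,e_2]}=[L_{e_1},L_{e_2}]$, i.e. $L_{e_2}=[L_{e_1},L_{e_2}]$; expanding this $2\times2$ commutator identity and simplifying produces the system
\begin{align*}
2c+ac+bp&=0,\\
p(2a-1)&=2c^{2},\\
2bc&=-(2a+1)(a+1).
\end{align*}
If $2a-1=0$ the middle equation forces $c=0$ and the last becomes $3=0$, impossible; hence $2a\neq1$ and $p=2c^{2}/(2a-1)$. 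If $c\neq0$, substituting this $p$ together with $b=-(2a+1)(a+1)/(2c)$ into the first equation and dividing by $c$ gives $(2+a)(2a-1)-(2a+1)(a+1)=0$, i.e. $-3=0$, again impossible. Therefore $c=0$, whence $p=q=0$, $d=-a$ and $2a^{2}+3a+1=0$, so $a\in\{-1,-\tfrac12\}$ and $b$ is the only surviving parameter.

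Finally I would normalize and separate the two cases. The automorphisms of $\mathfrak{g}$ are $e_1\mapsto e_1+\beta e_2$, $e_2\mapsto\gamma e_2$ with $\gamma\neq0$, and those with $\gamma=1$ preserve $\omega$; a direct check shows such an automorphism changes the parameter by $b\mapsto b+2\beta$ when $a=-1$ and by $b\mapsto b+\tfrac{\beta}{2}$ when $a=-\tfrac12$, so a suitable $\beta$ reduces $b$ to $0$. The two resulting products are exactly the tables \eqref{iso1} (case $a=-1$) and \eqref{iso2} (case $a=-\tfrac12$), and one verifies directly that each is left symmetric, torsion free and $\omega$‑parallel. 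To see they are non‑isomorphic I would exhibit an isomorphism invariant: each product has a unique nonzero idempotent $f$ — namely $f=-e_1$ for \eqref{iso1} and $f=-2e_1$ for \eqref{iso2} — and the conjugacy class of the right multiplication $R_f$ is invariant under algebra isomorphisms; in \eqref{iso1} the operator $R_f$ has eigenvalues $\{0,1\}$, whereas in \eqref{iso2} one has $R_f=\mathrm{id}$ (so $f$ is even a right unit, and $R_f$ has eigenvalues $\{1,1\}$). Hence the two left symmetric algebras, and with them the connections $\nabla_1$ and $\nabla_2$, are not isomorphic, which completes the argument. I expect the main obstacle to be organizational rather than deep: making the case analysis airtight so that the two coincidental ``$-3=0$'' contradictions collapse the naive one‑parameter families to exactly four candidate products, and checking that the leftover parameter $b$ is genuinely an artifact of the chosen frame — i.e. is removed by an automorphism preserving $\omega$ — so that one lands on precisely two isomorphism classes and not more.
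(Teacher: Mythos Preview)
Your argument is correct. The paper does not actually prove this proposition: it is quoted from \cite{MS} and stated without proof, so there is no ``paper's own proof'' to compare against. Your reduction to traceless left multiplications on the two–dimensional Lie algebra, the resulting three scalar equations, the elimination of the cases $2a-1=0$ and $c\neq 0$ via the two ``$-3=0$'' contradictions, and the normalization of the leftover parameter $b$ by the $\omega$-preserving automorphisms $e_1\mapsto e_1+\beta e_2$, $e_2\mapsto e_2$ are all accurate (I checked the commutator identity $[L_{e_1},L_{e_2}]=L_{e_2}$ and the transformation rules for $b$). The invariant you use to separate the two algebras — the spectrum of $R_f$ at the unique nonzero idempotent, with $R_f$ having eigenvalues $\{0,1\}$ in \eqref{iso1} versus $R_f=\mathrm{id}$ in \eqref{iso2} — is clean and in fact shows they are non-isomorphic even as abstract left symmetric algebras, which is stronger than what is asserted.
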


Recall that a \textbf{\textit{symplectomorphism}} of $(M,\omega)$ is a diffeomorphism $f$ of $M$ such that $f^*\omega=\omega,$ i.e., a diffeomorphism which preserves the symplectic form $\omega.$  \\
An easy calculation shows the following
\begin{lemma}
A diffeomorphism $f=(f_1,f_2)$ of the manifold $\mathsf{Aff}(\mathbb{R})_0$ is a symplectomorphism relative to  $\omega^+$ if and only if it verifies the differential equation 
	$$\frac{\partial f_1}{\partial x}\frac{\partial f_2}{\partial y}-\frac{\partial f_1}{\partial y}\frac{\partial f_2}{\partial x}=\left(\frac{f_1}{x}\right)^2$$
\end{lemma}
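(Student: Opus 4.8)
The plan is to compute directly the pullback $f^*\omega^+$ for an arbitrary diffeomorphism $f=(f_1,f_2)$ of $\mathsf{Aff}(\mathbb{R})_0$ and to compare it with $\omega^+$. Recall that on the natural coordinates $(x,y)$ with $x>0$ the left invariant symplectic form is $\omega^+=-\frac{1}{x^2}\,\di x\wedge\di y$. First I would write $f^*(\di x)=\frac{\partial f_1}{\partial x}\,\di x+\frac{\partial f_1}{\partial y}\,\di y$ and $f^*(\di y)=\frac{\partial f_2}{\partial x}\,\di x+\frac{\partial f_2}{\partial y}\,\di y$, so that
\[
f^*(\di x\wedge\di y)=\left(\frac{\partial f_1}{\partial x}\frac{\partial f_2}{\partial y}-\frac{\partial f_1}{\partial y}\frac{\partial f_2}{\partial x}\right)\di x\wedge\di y,
\]
which is just the Jacobian determinant $\det(Df)$ times $\di x\wedge\di y$. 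Since $f^*$ commutes with the purely functional factor, $f^*\omega^+=-\frac{1}{f_1^2}\,f^*(\di x\wedge\di y)=-\frac{\det(Df)}{f_1^2}\,\di x\wedge\di y$.

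Next I would impose the symplectomorphism condition $f^*\omega^+=\omega^+$. Comparing the coefficient functions of the $2$-form $\di x\wedge\di y$ on the (connected, open) manifold $\mathsf{Aff}(\mathbb{R})_0$, this is equivalent to the pointwise scalar identity
\[
\frac{\det(Df)}{f_1^2}=\frac{1}{x^2},
\]
that is, $\det(Df)=\bigl(f_1/x\bigr)^2$. Writing out the Jacobian determinant explicitly gives precisely
\[
\frac{\partial f_1}{\partial x}\frac{\partial f_2}{\partial y}-\frac{\partial f_1}{\partial y}\frac{\partial f_2}{\partial x}=\left(\frac{f_1}{x}\right)^2,
\]
which is the asserted differential equation. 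The converse implication is immediate by reading the chain of equalities backwards. Since $\di x\wedge\di y$ is a nowhere-vanishing top form on a $2$-dimensional manifold, equality of the two $2$-forms is genuinely equivalent to equality of their coefficients, so no information is lost in either direction.

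There is essentially no obstacle here; the statement is called an ``easy calculation'' in the text, and the only thing to be careful about is the bookkeeping of signs and the fact that one must work on $\mathsf{Aff}(\mathbb{R})_0$ where $x>0$, so that $x^2$ and $f_1^2$ never vanish and division is legitimate (here $f_1$ necessarily takes values in the $x>0$ half-plane because $f$ is a diffeomorphism of $\mathsf{Aff}(\mathbb{R})_0$). The minus signs on both sides of $f^*\omega^+=\omega^+$ cancel, which is why the final equation involves $(f_1/x)^2$ with a plus sign. I would present this as a two-line derivation: express $f^*\omega^+$ via the Jacobian, then equate coefficients.
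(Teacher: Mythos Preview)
Your proof is correct and is exactly the direct computation the paper has in mind; the paper itself gives no proof beyond the remark that ``an easy calculation shows the following,'' and your pullback/Jacobian argument is that calculation.
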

In particular the following maps are (functionally independent) symplectomorphism 
\begin{align}
f(x,y)&=\left(\frac{axe^y}{1+axe^yp(y)}\ ,\ ae^y\right),\ \mbox{with $a>0,$ $p(y)\in C^{\infty}$ and $p>0$.}\label{simple1}\\
g(x,y)&=\left(e^y\ ,\ \frac{e^y}{x}\right).\label{simple2}\\
h(x,y)&=\left(p(x)\ , \ \frac{(h(x)^2)y}{h'(x)x^2}+l(x) \right),\ \mbox{with $p(x),l(x)\in C^{\infty}$ and $p>0$.}
\end{align}

 The previous lemma implies the following.
\begin{proposition} Let $\nabla_1$ and  $\nabla_2$ be the connections given by the products  \eqref{iso1} and \eqref{iso2}. Then the maps
	$$F(x,y)=(ax,-bx^{-1}+ay+d),$$
	and 
	$$G(x,y)=\left(ax,-2b\sqrt{x}+ay+d\right)$$
	where $a,b,d$ are real numbers with $a>0,$ are symplectomorphisms and   affine transformations  relatively  to $(G_0,\nabla_1)$ and respectively $(G_0,\nabla_2)$.  
\end{proposition}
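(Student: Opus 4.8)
The claim splits into two independent verifications, one for $(G_0,\nabla_1)$ with the map $F(x,y)=(ax,-bx^{-1}+ay+d)$, and one for $(G_0,\nabla_2)$ with $G(x,y)=(ax,-2b\sqrt{x}+ay+d)$. Each map must be shown to be (i) a symplectomorphism of $(G_0,\omega^+)$ and (ii) an affine transformation of the corresponding flat affine connection. For (i), I would simply plug each map into the differential criterion of the preceding Lemma, namely $\partial_xf_1\,\partial_yf_2-\partial_yf_1\,\partial_xf_2=(f_1/x)^2$. For $F$ one has $f_1=ax$, $f_2=-bx^{-1}+ay+d$, so $\partial_xf_1=a$, $\partial_yf_2=a$, $\partial_yf_1=0$, giving left-hand side $a^2=(ax/x)^2$; for $G$, $f_1=ax$ again and $\partial_yf_2=a$, $\partial_yf_1=0$, so the same computation works. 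This disposes of the symplectic part immediately.

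\textbf{The affine part.} Here I would use the covariant-derivative description rather than the frame-bundle definition. A diffeomorphism $\Phi$ of $G_0$ is affine for $\nabla$ iff $\Phi_\ast(\nabla_XY)=\nabla_{\Phi_\ast X}\Phi_\ast Y$ for all vector fields $X,Y$; since the connections here are left invariant and determined by the left-symmetric products \eqref{iso1} and \eqref{iso2} on $\mathfrak g=\mathsf{Lie}\,\mathsf{Aff}(\mathbb R)$, it suffices to check this identity on the basis of left invariant vector fields $e_1^+,e_2^+$. Concretely, I would first write $e_1^+$ and $e_2^+$ explicitly in the coordinates $(x,y)$ on $G_0$ (for $\mathsf{Aff}(\mathbb R)$ with product $(x,y)(x',y')=(xx',xy'+y)$ one gets $e_1^+=x\partial_x+y\partial_y$ and $e_2^+=x\partial_y$, up to the normalization used in \cite{MS}), then compute the pushforwards $F_\ast e_i^+$ and $G_\ast e_i^+$ using the Jacobians of $F$ and $G$, express these pushforwards back in the basis $\{e_1^+,e_2^+,\partial_x,\partial_y\}$, and finally compare $\Phi_\ast(e_i^+\!\cdot e_j^+)$ with $\nabla_{\Phi_\ast e_i^+}(\Phi_\ast e_j^+)$ term by term, where $e_i^+\!\cdot e_j^+$ is read off from the tables and $\nabla$ acts on non-left-invariant fields via the Leibniz rule $\nabla_X(fZ)=X(f)Z+f\nabla_XZ$.

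\textbf{Main obstacle.} The symplectic half is trivial; the real work is the affine half, and there the subtlety is that $F$ and $G$ are \emph{not} left translations, so $F_\ast e_i^+$ and $G_\ast e_i^+$ are genuinely non-left-invariant vector fields (they pick up terms in $\partial_x$ from the $-bx^{-1}$ and $-2b\sqrt x$ pieces). The crux is therefore to organize the computation so that the extra $b$-dependent terms cancel: one expects $F_\ast e_i^+ = e_i^+ + (\text{linear in }b)\,\partial_y$ or similar, and the affine condition will reduce to an identity that holds precisely because $x^{-1}$ (resp. $\sqrt x$) is the parallel/geodesic profile singled out by the developing map $D_{2,\alpha}$ (resp. $D_{1,\alpha}$) of the previous section — indeed $F$ and $G$ are exactly the affine automorphisms conjugating the developing coordinates to themselves. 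An alternative, cleaner route that I would mention: use Proposition~\ref{desarrollante}, compute $D\circ F$ and $D\circ G$ explicitly using the developing maps $D_{1,\alpha}$, $D_{2,\alpha}$, and check that each equals an affine map of $\mathbb R^2$ composed with $D$; since $D$ is an immersion and the diagram \eqref{diagrama} characterizes affine transformations, this forces $F,G\in\mathsf{Aff}(G_0,\nabla_i)$. This reduces everything to elementary algebra with $x^\alpha$, $\ln x$, and the given linear-fractional forms, avoiding any pushforward-of-vector-field bookkeeping.
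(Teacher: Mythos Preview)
Your plan is correct and in fact more complete than what the paper offers. The paper gives no proof at all for this proposition: it merely writes ``The previous lemma implies the following'' and states the result. At best that sentence covers the symplectomorphism half (substitute $F$ and $G$ into the PDE $\partial_xf_1\,\partial_yf_2-\partial_yf_1\,\partial_xf_2=(f_1/x)^2$), which is exactly what you do; the affine-transformation half is left entirely to the reader. Your two strategies for that half --- either checking $\Phi_\ast(\nabla_{e_i^+}e_j^+)=\nabla_{\Phi_\ast e_i^+}\Phi_\ast e_j^+$ directly on the left-invariant frame, or conjugating by the developing maps $D_{1,\alpha}$, $D_{2,\alpha}$ from Section~3 and invoking Proposition~\ref{desarrollante} --- are both sound, and the second one is indeed the cleaner route since it reduces everything to checking that $D\circ F$ and $D\circ G$ are classical affine maps of $\R^2$ precomposed with $D$.

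One small correction to your parenthetical: with the product $(x,y)(x',y')=(xx',xy'+y)$ the left-invariant extensions are $e_1^+=x\,\partial_x$ and $e_2^+=x\,\partial_y$ (not $x\partial_x+y\partial_y$); you hedged this with ``up to the normalization used in \cite{MS}'', and it does not affect the argument, but it is worth getting right before carrying out the pushforward computation.
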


We finish by computing the Hess connection associated to a   Lagrangian bi-foliation of $\mathsf{Aff}(\mathbb{R})$ and we show that each of these connections is flat. 

\begin{proposition}
There are on $(G,\omega^+)$ two transversal Lagrangian foliations given respectively by $\mathcal{F}_i=\R e_i^+,$ $i=1,2.$ The Hess connection associated to this bifolation  ($\mathcal{F}_1$ and $\mathcal{F}_2$)  is a flat affine and Lorentz connection. 
\end{proposition}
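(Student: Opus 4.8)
The plan is to do everything at the Lie algebra level. Since the symplectic form $\omega^+$ and the two line fields $\mathcal{F}_i=\R e_i^+$ are all left invariant, the Hess connection they determine will be left invariant, hence given by a left symmetric product on $\mathfrak{g}=\mathsf{aff}(\R)=\R e_1\oplus\R e_2$ (here $[e_1,e_2]=e_2$). First I would dispatch the bi-Lagrangian hypotheses: each $\mathcal{F}_i$ is a rank-one distribution, hence integrable, so it is a foliation; since $(e_1^+,e_2^+)$ is a left invariant frame the two foliations are transversal; and every leaf, being a one-dimensional submanifold of the symplectic surface $(G,\omega^+)$, is automatically Lagrangian. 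I would also record the normalization $\omega^+(e_1^+,e_2^+)=-1$, a nonzero constant.

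Next I would recall the defining property of the Hess connection of a transversal pair of Lagrangian foliations: it is the unique torsion-free connection $\nabla$ with $\nabla\omega^+=0$ and $\nabla\,\Gamma(T\mathcal{F}_i)\subseteq\Gamma(T\mathcal{F}_i)$ for $i=1,2$. Writing $\mathfrak{f}_i=\R e_i$ and $\mathfrak{g}=\mathfrak{f}_1\oplus\mathfrak{f}_2$, these axioms force, on left invariant fields, $\nabla_XZ=[X,Z]_{\mathfrak{f}_2}$ and $\nabla_ZX=-[X,Z]_{\mathfrak{f}_1}$ for $X\in\mathfrak{f}_1,\ Z\in\mathfrak{f}_2$, together with $\omega^+(\nabla_XY,Z)=-\omega^+(Y,[X,Z])$ when $X,Y$ lie in the same $\mathfrak{f}_i$ and $Z$ in the other; the check that these formulas really give a torsion-free symplectic connection is just the Cartan identity $\di\omega^+=0$ applied to the frame $(e_1^+,e_2^+)$, and is short in a surface. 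Substituting $[e_1^+,e_2^+]=e_2^+$ yields
$$\nabla_{e_1^+}e_1^+=-e_1^+,\qquad \nabla_{e_1^+}e_2^+=e_2^+,\qquad \nabla_{e_2^+}e_1^+=0,\qquad \nabla_{e_2^+}e_2^+=0,$$
which is exactly the left symmetric product \eqref{iso1}; in other words the Hess connection of this bifoliation is the connection $\nabla_1$. Flatness is then immediate, either from the proposition quoted from \cite{MS} or from the one-line computation $R(e_1^+,e_2^+)e_1^+=R(e_1^+,e_2^+)e_2^+=0$, which propagates to all of $TG$ by bilinearity.

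Finally, for the Lorentzian statement I would introduce the left invariant pseudo-Riemannian metric $g$ with $g(e_1^+,e_1^+)=g(e_2^+,e_2^+)=0$ and $g(e_1^+,e_2^+)=1$; equivalently $g=\omega^+(\cdot,J\cdot)$ for the para-complex structure $J$ acting as $+\mathrm{id}$ on $T\mathcal{F}_1$ and $-\mathrm{id}$ on $T\mathcal{F}_2$. Its matrix in $(e_1^+,e_2^+)$ is off-diagonal with both off-diagonal entries equal to $1$, hence of signature $(1,1)$, so $g$ is Lorentzian, with the two foliations as its null directions. Because $\nabla$ preserves the splitting $TG=T\mathcal{F}_1\oplus T\mathcal{F}_2$ it preserves $J$, and $\nabla\omega^+=0$, whence $\nabla g=0$; being also torsion-free, $\nabla$ is the Levi-Civita connection of $g$. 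Thus the Hess connection is flat, left invariant affine, and Lorentz. The only genuinely delicate point is the beginning of the middle step—verifying that the explicit formulas above are indeed the Hess connection (existence, uniqueness, and the two compatibility axioms)—but in dimension two this reduces entirely to $\di\omega^+=0$, so no real obstacle arises; the rest is bookkeeping.
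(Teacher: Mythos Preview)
Your argument is correct and reaches the same endpoint as the paper---the product table \eqref{iso1} and the null-frame Lorentz metric $\langle e_1,e_2\rangle=1$, $\langle e_i,e_i\rangle=0$---but the route differs in two places. The paper computes the Hess connection from Hess's original recipe: it first builds the auxiliary left symmetric product $\odot$ on $\mathfrak g$ via $\omega(a\odot b,c)=-\omega(b,[a,c])$, and then assembles $\nabla^H_{(X_1,X_2)}(Y_1,Y_2)=\bigl(X_1\odot Y_1+[X_2,Y_1]_1,\ X_2\odot Y_2+[X_1,Y_2]_2\bigr)$, evaluating on the basis to obtain the table $\star$. You instead invoke the axiomatic characterization of the Hess connection (the unique torsion-free symplectic connection parallelizing both Lagrangian foliations) and solve those constraints directly on the frame $(e_1^+,e_2^+)$; this bypasses the auxiliary $\odot$ product entirely. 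For the Lorentzian claim, the paper checks $\langle e_i\star e_j,e_k\rangle+\langle e_j,e_i\star e_k\rangle=0$ on all basis triples, whereas you use the para-K\"ahler packaging $g=\omega^+(\cdot,J\cdot)$ together with $\nabla\omega^+=0$ and $\nabla J=0$ (the latter following from preservation of the splitting) to get $\nabla g=0$ without computation. Your approach is more conceptual and makes clear why the metric compatibility is automatic; the paper's is more hands-on and self-contained, not requiring the reader to know the uniqueness characterization of the Hess connection.
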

\proof
The formula $\omega(ab,c)=-\omega(b,[a,c])$ defines on the vector space $\mathfrak{g}=\mathsf{aff}(\R)$ a left symmetric product compatible with Lie bracket given by following table
\begin{align}\label{productoformulasym} 
\begin{array}{c|cc}
\odot&e_1&e_2\\
\hline
e_1&-e_1&0\\
e_2&-e_2&0
\end{array}
\end{align}
Clearly $\R e_1$ and $\R e_2$ are transversal Lagrangian Lie subalgebras of $(\mathfrak{g},\omega)$. 
Recall that the Hess connection associated to $\mathcal{F}_1$ and $\mathcal{F}_2$ (see \cite{Hess})  is defined by 
$$\nabla^H_{(X_1,X_2)}(Y_1,Y_2)=(X_1\odot Y_1+[X_2,Y_1]_1\ ,\ X_2\odot Y_2+[X_1,Y_2]_2)$$
where $[\ ,\ ]_i$ is the projection on the $i-$foliation and $\odot$ is the product given by  \eqref{productoformulasym}.\\
A direct computation shows that this connection correspond to bilinear product on $\mathfrak{g}$ given by
\begin{align}\label{ConexionHess} 
\begin{array}{c|cc}
\star&e_1&e_2\\
\hline
e_1&-e_1&e_2\\
e_2&0&0
\end{array}
\end{align}
From the fact that 
\begin{align*}
L_{e_1}\circ L_{e_2}-L_{e_2}\circ L_{e_1}&=L_{[e_1,e_2]}=L_{e_2}
\end{align*}
where $L_{e_i}$ is the endomorphism  $L_{e_i}(a)=e_i\star a,$ it follows that \eqref{ConexionHess} defines a left invariant flat affine connection on $\mathsf{Aff}(\mathbb{R})$.\\
Moreover, since $[e_1,e_2]=e_1\star e_2-e_2\star e_1$ we get that the connection $\nabla^H$ is torsion free.\\
Let $\langle\ ,\ \rangle$ denote the left invariant Lorentz metric on $\mathsf{Aff}(\mathbb{R})$ associated to the scalar product on  $\mathfrak{g}$ determined by $\langle e_1 ,e_2 \rangle=1$ and $\langle e_i ,e_i \rangle=0$, $i=1,2$,
an easy calculation shows that 
$$\langle e_i\star e_j, e_k \rangle+\langle e_j,e_i\star e_k \rangle=0$$
for all $i,j,k=1,2.$ Therefore the Hess connection on $\mathsf{Aff}(\mathbb{R})$ is a Lorentzian connection. 
\endproof

\noindent{\textbf{Acknowledgment}}\\
I would like to thank all the members of the Geometry Seminar of the Universidad de Antioquia. They were very helpful at all stages of this work.   

\end{document}